\newtheorem{theorem}{Theorem}[section]
\newtheorem{lemma}{Lemma}[section]
\newtheorem{remark}{Remark}[section]
\def\rr{{\mathbb R}}
\def\rn{{{\rr}^n}}
\def\cf{{\mathcal F}}
\def\kz{\kappa}
\def\fint{{\ifinner\rlap{\bf\kern.25em--}
\int\else\rlap{\bf\kern.45em--}\int\fi}\ignorespaces}
\def\diam{{\mathop\mathrm{\,diam\,}}}
\begin{document}

\title{A $(\phi_n, \phi)$-Poincar\'e inequality on John domain }

\begin{abstract}
Given a bounded domain  $\Omega \subset \rn$ with $n\ge2$,
let   $\phi $  be  a  Young function satisfying
the doubling condition with  the   constant  $K_\phi<2^{n}$.
 If $\Omega$ is a John domain,
we show that $\Omega $ supports a
$(\phi_{n}, \phi)$-Poincar\'e   inequality.
 Conversely, if $\Omega$ supports a $(\phi_n, \phi)$-Poincar\'e   inequality with an additional assumption that it is a simply connected domain when $n=2$ or a bounded domain which is quasiconformally equivalent to some uniform domain when $n\ge3$, then it is a John domain.
\end{abstract}

\keywords{Orlicz-Sobolev space, $(\phi_{n}, \phi)$-Poincar\'e   inequality, John domain
}
\subjclass[2020]{42B35}

\author{Shangying Feng and Tian Liang}

\address{Shangying Feng\\ Department of Mathematics\\ Beijing Normal University\\ Beijing 100191, P.R. China;}
\email{\tt 202021130044@mail.bnu.edu.cn}

\address{Tian Liang\\
School of Mathematics and Statistics\\
Huizhou University\\
Guangdong 516007\\
P.R. China}
\email{\tt liangtian@hzu.edu.cn}

\date{\today }
\maketitle

\section{Introduction}\label{s1}

Orlicz-Sobolev spaces are particularly useful when the non-linearity in the problem does not follow a polynomial growth, which is a common assumption in classical Lebesgue spaces. 
The non-polynomial nature of the Orlicz function captures a wide range of nonlinear behaviors, making them suitable for modeling real-world phenomena with non-standard growth conditions.
The cited references, such as  \cite{ am02, BS12,  BS11, t90, w10} , likely delve into the theory of Orlicz spaces, their connections to Sobolev embeddings, and the specific results and techniques developed for solving problems in these spaces. 

Let  $\Omega$ be a bounded domain in $\rn$ with $n\ge2$.
Assume $\phi$ is a Young function in $[0, \infty)$,  that is,  
$\phi \in C[0, \infty)$ is convex and satisfies $\phi(0)=0,  \phi(t)>0$ for $t>0$ 
and $\lim \limits_{t \to \infty}\phi(t)=\infty$.
Recall the Orlicz space $L^{\phi}(\Omega)$ as the collection of all measurable functions $u$ in $\Omega$ with the semi-norm
\begin{eqnarray*}
\|u\|_{{L}^{ \phi}(\Omega)} := \inf \left\{\lambda > 0 : \int_{\Omega}  \phi \left(\frac{|u(x)|}{\lambda}\right) dx\leq 1 \right\}<\infty.
\end{eqnarray*}
The classical Orlicz-Sobolev space $W^{1, \phi}(\Omega)$ consists of all measurable functions $u \in L^{\phi}(\Omega)$ and $\nabla u \in L^{\phi}(\Omega)$, whose norm is
\begin{align*}
\|u\|_{W^{1, \phi}(\Omega)}:=\|u\|_{L^\phi(\Omega)}+\|\nabla u\|_{L^\phi(\Omega)}.
\end{align*}
Sometimes we consider the homogeneous Orlicz-Sobolev space $\dot{W}^{1, \phi}(\Omega)$ with its norm
$\|u\|_{W^{1, \phi}(\Omega)}:=\|\nabla u\|_{L^\phi(\Omega)}$,
whose  sharp embedding  has been solved in \cite{c96}(see also \cite{c97} for an alternate formulation of the solution)
and in \cite{refc1} for high order version.
The detailed description is as follows.

\begin{theorem}
Let  $\Omega$ is an open bounded domain in $\rn$ with finite measure and $\phi$ is a Young function  satisfying
\begin{equation}\label{eq2}
\int_0^t\left(\frac{\tau}{\phi(\tau)}\right)^\frac{1}{n-1}d\tau<\infty.
\end{equation}
Define    $\phi_n:=\phi\circ H^{-1}$,
% given by \begin{equation}\label{eq4}
%\phi_\frac{n}{s}=\phi(H^{-1}(t))~for~t\geq0,
%\end{equation}
where
\begin{equation}\label{eq5}
H(t)=\left[\int_0^t\left(\frac{\tau}{\phi(\tau)}\right)^\frac{1}{n-1}d\tau\right]^\frac{n-1}{n}~\forall ~t\geq0.
\end{equation}
Then $\dot{W}^{1, \phi}(\Omega)\subset L^{\phi_{n}}(\Omega)$, that is,
for any $u\in \dot{W}^{1, \phi}(\Omega)$, one has $u\in  L^{\phi_{n}}(\Omega)$ with
$\|u\|_{L^{\phi_{n}}(\Omega)}\le C \|u\|_{\dot{W}^{1, \phi}(\Omega)}$, where $C$ is a constant independent of $u$.
\end{theorem}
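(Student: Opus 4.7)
The plan is to prove the embedding by symmetric decreasing rearrangement, reducing the $n$-dimensional problem to a sharp one-dimensional weighted Orlicz Hardy inequality whose extremal structure dictates the precise form $\phi_n = \phi \circ H^{-1}$ of the target.

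For $u \in \dot{W}^{1,\phi}(\Omega)$ let $u^*$ denote the symmetric decreasing rearrangement on the ball $\Omega^* \subset \rn$ with $|\Omega^*| = |\Omega|$. The Luxemburg norm is rearrangement invariant, so $\|u\|_{L^{\phi_n}(\Omega)} = \|u^*\|_{L^{\phi_n}(\Omega^*)}$, and the Orlicz version of the P\'olya--Szeg\H{o} principle gives $\|\nabla u^*\|_{L^\phi(\Omega^*)} \le \|\nabla u\|_{L^\phi(\Omega)}$; this follows in the standard way from the coarea formula, the Euclidean isoperimetric inequality, and the convexity of $\phi$ applied via Jensen's inequality on the level sets of $u$. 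Hence I may assume throughout that $u(x) = f(|x|)$ is radial and nonincreasing on $(0,R)$, with $\omega_n R^n = |\Omega|$.

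Next I translate the estimate into one dimension. Since $|\nabla u(x)| = |f'(|x|)|$,
\begin{equation*}
\int_{\Omega^*} \phi(|\nabla u|)\,dx = n\omega_n \int_0^R \phi(|f'(r)|)\, r^{n-1}\,dr,
\end{equation*}
and by the fundamental theorem $f(r) = \int_r^R |f'(s)|\,ds$. The substitution $t = \omega_n s^n$ converts the problem into a weighted inequality on $(0, |\Omega|)$ with a weight $t^{1/n-1}$ coming from the Jacobian. The function $H$ in \eqref{eq5} is engineered so that, after the further change of variable $\tau = H(t)$ together with the definition $\phi_n = \phi \circ H^{-1}$, this weight is absorbed exactly, converting the $L^\phi$-bound on the gradient into the desired $L^{\phi_n}$-bound on the function.

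The principal technical step --- and the main obstacle --- is the resulting sharp one-dimensional weighted Orlicz Hardy inequality that controls the $L^{\phi_n}$-norm of the Hardy-type operator $g \mapsto \int_t^{|\Omega|} g(s)\, s^{1/n - 1}\,ds$ by the $L^\phi$-norm of $g$. Its proof combines Jensen's inequality with the substitution induced by $H$; the algebraic identity $\phi_n \circ H = \phi$ is exactly what makes the estimate sharp. Condition \eqref{eq2} ensures the integrand in \eqref{eq5} is integrable near zero, so that $H$ is well-defined and $\phi_n$ is itself a Young function; without \eqref{eq2} the embedding would fail. Once the one-dimensional inequality is established, unwinding the substitutions and the rearrangement yields $\|u\|_{L^{\phi_n}(\Omega)} \le C \|\nabla u\|_{L^\phi(\Omega)}$ with $C$ depending only on $n$ and $\phi$.
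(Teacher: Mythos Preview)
The paper does not supply a proof of this statement; it is quoted from Cianchi \cite{c96} (with the higher-order extension in \cite{refc1}) as background for Theorem~\ref{th1}, so there is no in-paper argument to compare against. Your outline via symmetric decreasing rearrangement, P\'olya--Szeg\H{o}, and a sharp one-dimensional weighted Orlicz--Hardy inequality is in fact Cianchi's original strategy, and the role you assign to $H$ and to the identity $\phi_n\circ H=\phi$ is correct.

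There is, however, a genuine gap in the setup. Both the P\'olya--Szeg\H{o} step $\|\nabla u^*\|_{L^\phi(\Omega^*)}\le\|\nabla u\|_{L^\phi(\Omega)}$ and the formula $f(r)=\int_r^R|f'(s)|\,ds$ require $u$ to vanish on $\partial\Omega$ (so that $u$ extends by zero to $\rn$ and $f(R)=0$). For an arbitrary bounded $\Omega$ and a general $u\in\dot W^{1,\phi}(\Omega)$ the inequality $\|u\|_{L^{\phi_n}}\le C\|\nabla u\|_{L^\phi}$ is simply false: any nonzero constant violates it. Cianchi's result is really a statement about $W^{1,\phi}_0(\Omega)$, and indeed the whole point of the paper's Theorem~\ref{th1} is that for the homogeneous space (with $u$ replaced by $u-u_\Omega$) one needs a John-type geometric condition on $\Omega$. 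Your argument is sound once the zero-boundary restriction is made explicit, but you must state it rather than leave it hidden in the rearrangement step.
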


We are interested in bounded domains which supports the imbedding
$\dot{W}^{1, \phi}(\Omega)\subset L^{\phi_{n}}(\Omega)$ or  $(\phi_n, \phi)$-Poincar\'e inequality,
that is, there exists a constant $C\geq1$ such that
\begin{align}\label{ueq1}
\|u-u_\Omega\|_{L^{\phi_n}(\Omega)}\leq C\|u\|_{\dot{W}^{1, \phi}(\Omega)},
\end{align}
where $u_{\Omega}=  \fint_{\Omega}u = \frac{1}{|\Omega|} \int _{\Omega} u dx $
denotes the average of $u$ in the set of $\Omega$ with $\lvert \Omega\rvert>0$.

The primary goal of this paper is to effectively characterizes
supports  the imbedding   $\dot{W}^{1, \phi}(\Omega) \subset L^{\phi_{n}}(\Omega)$
via John domains under certain doubling assumption in $\phi$;
see   Theorem 1.2 below.
Recall that a bounded domain $\Omega \subset \rn$ is called as a $c$-John domain with respect to some $x_0 \in \Omega $ for some $c>0$ if 
for  any $x \in\Omega$,  
there is a rectifiable curve $\gamma : [0, T] \rightarrow \Omega$ parameterized by arc-length such that $\gamma(0)=x$,  $\gamma(T)=x_0$ and $d(\gamma(t),  \Omega ^{\complement}) > ct $ for all $t>0$.
We could refer to  \cite{ b82, bk95, bsk96, bkl95,m79, m88, r83}
and references therein for more study on $c$-John domains.
Moreover, a Young function $\phi$ has the doubling property ($\phi \in \Delta_2$) if
\begin{equation}\label{da2}
K_{\phi}:=\sup_{x>0}\frac{\phi(2t)}{\phi(t)}<\infty.
\end{equation}
It is well known that if a Young function $\phi \in \Delta_2$ with $K_\phi<2^{n}$,  
then $\phi$ satisfies \eqref{eq2},  see Lemma \ref{s2.l2}.

\begin{theorem}\label{th1}
 Let $\phi$ be a Young function and $\phi \in \Delta_2$ with $K_\phi<2^{n}$ in \eqref{da2}.
\begin{enumerate}
\item[(i)] If $\Omega \subset \mathbb{R}^n $ is a $c$-John domain,
then $\Omega$ supports the $(\phi_n, \phi)$-Poincar\'e inequality \eqref{ueq1} with the constant $C$ depending on $n,    c$ and $K_\phi$.

\item[(ii)] Assume that $\Omega \subset \mathbb{R}^n $ is a bounded simply connected planar domain,  or a bounded domain which is a quasiconformally equivalent to some uniform domain when $n\geq3$. If $\Omega$ supports the $(\phi_n, \phi)$- Poincar\'e inequality,  then $\Omega$ is a c-John domain,  where the constant $c$ depend on $n,    C, K_\phi$ and $\Omega$.
\end{enumerate}
\end{theorem}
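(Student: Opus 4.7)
The plan is to prove the two implications by adapting the now-classical chain-of-cubes/Whitney-decomposition machinery (as used for the $(q,p)$-Poincaré characterization on John domains by Bojarski, Buckley--Koskela, and others) to the Orlicz setting, with the doubling hypothesis $K_\phi<2^n$ playing the role that $p<n$ plays in the Lebesgue case.

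For (i) I would begin with a Whitney decomposition $\{Q_j\}$ of $\Omega$ and fix a central Whitney cube $Q_0$ containing the John center $x_0$. The John condition delivers, for every $x\in\Omega$ lying in some $Q_j$, a chain of Whitney cubes $Q_0=Q^{(0)},Q^{(1)},\dots,Q^{(N)}=Q_j$ with the standard overlap and geometric-decrease properties, i.e.\ $\ell(Q^{(k)})\sim\dist(Q^{(k)},x_0)$ and adjacent cubes have comparable sidelength. Using the telescoping trick one gets the pointwise control
\begin{equation*}
|u_{Q_j}-u_{Q_0}|\le C\sum_{k=0}^N\ell(Q^{(k)})\fint_{Q^{(k)}}|\nabla u|\,dy,
\end{equation*}
from which a standard covering argument yields, for a.e.\ $x\in\Omega$,
\begin{equation*}
|u(x)-u_{Q_0}|\le C\int_{\Omega}\frac{|\nabla u(y)|}{|x-y|^{n-1}}\,dy=:C\,I_1(|\nabla u|\chi_\Omega)(x).
\end{equation*}
The remaining, and really the main, step of (i) is the Orlicz boundedness of the Riesz potential $I_1\colon L^\phi(\Omega)\to L^{\phi_n}(\Omega)$. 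This is precisely the Cianchi-type Sobolev estimate that underlies Theorem 1.1 of the excerpt, and it is where the hypothesis $K_\phi<2^n$ enters: doubling with $K_\phi<2^n$ forces $t\mapsto\phi(t)/t^n$ to be integrable near zero (Lemma 2.2 cited in the paper), which is the growth condition guaranteeing the existence of $\phi_n$ via the function $H$ in \eqref{eq5} and the resulting modular inequality $\int\phi_n(I_1 f)\le C\int\phi(|f|)$ after proper normalization. Replacing $u_{Q_0}$ by $u_\Omega$ costs only a constant, so this gives \eqref{ueq1}.

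For (ii) I would argue by contradiction: assume $\Omega$ supports $(\phi_n,\phi)$-Poincaré but is not John, and manufacture a test function that breaks \eqref{ueq1}. In the planar simply connected case, failure of the John condition produces a sequence of "necks" along which $\Omega$ nearly pinches off; simple connectivity lets one use a Jordan-arc argument to separate $\Omega$ into two pieces $E_1,E_2$ of comparable $\phi_n$-measure joined through a thin corridor $T$ with $|T|$ arbitrarily small. The standard candidate is a Lipschitz cut-off $u$ which is $\approx 1$ on $E_1$, $\approx 0$ on $E_2$, and linearly interpolates across $T$; then $u-u_\Omega$ has $\|u-u_\Omega\|_{L^{\phi_n}(\Omega)}$ bounded below (in terms of $\phi_n^{-1}(1/|E_i|)$) while $\|\nabla u\|_{L^\phi(\Omega)}$ is controlled by $\phi^{-1}(1/|T|)\cdot(\mathrm{diam}\,T)^{-1}$ on the neck. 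The John-failure sequence forces the ratio of these two norms to blow up, contradicting \eqref{ueq1}. For $n\ge3$ the quasiconformal map to a uniform domain transports the test function and distorts the norms in a controlled, quasi-invariant way, reducing to a version of the same separation argument.

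The main obstacle I expect is the sharp Orlicz-norm bookkeeping in both halves: in (i) the Riesz-potential estimate in Orlicz norm requires care about how $\phi_n$ is produced from $\phi$ through $H$ and how the modular inequality is converted to a norm inequality (the usual $L^p$-duality shortcut is unavailable), and in (ii) one must verify that the Cianchi-type conjugate $\phi_n$ really does make the test function ratio blow up along the non-John necks---this is where the condition $K_\phi<2^n$ again turns out to be essential, since it guarantees that $\phi_n$ grows strictly faster than $\phi$ and so amplifies the contribution of the large pieces $E_1,E_2$ relative to the thin corridor.
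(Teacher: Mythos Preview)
For part (i) your Riesz-potential route is a genuine alternative to what the paper actually does. The paper never passes through a pointwise inequality $|u-u_{Q_0}|\le C I_1(|\nabla u|\chi_\Omega)$; instead it works entirely at the modular level via Boman's chain decomposition, applies Cianchi's sharp inequality on each chain cube (Lemma~\ref{s2.l5}), and then sums the chain telescoping terms using the Fefferman--Stein vector-valued maximal inequality in Orlicz norm (Lemma~\ref{le2.4}), which requires $\phi_n\in\Delta_2\cap\nabla_2$ (proved in Lemma~\ref{s2.l4} from $K_\phi<2^n$). Your approach trades all of that maximal-function machinery for the single mapping property $I_1:L^\phi\to L^{\phi_n}$ (the O'Neil/Cianchi Riesz-potential estimate); it is shorter once that estimate is granted, whereas the paper's argument is more self-contained and makes the $K_\phi$-dependence explicit through $K_{\phi_n}$. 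Either route is sound.

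For part (ii) there is a real gap. The paper does \emph{not} argue by contradiction from ``not John''; it proceeds positively in two steps. First, from the $(\phi_n,\phi)$-Poincar\'e inequality alone it proves that $\Omega$ has the LLC(2) property (Lemma~\ref{le421}): any two points of $\Omega\setminus\overline{B(z,r)}$ can be joined inside $\Omega\setminus\overline{B(z,br)}$ for a fixed $b\in(0,1)$. The Lipschitz cut-offs you describe are exactly the tools used here (the radial functions $u_{z,r,t}$ of \eqref{3.1} and a path-length function $\omega_{x_0,z,r}$), but they are deployed to produce quantitative \emph{measure estimates} on components of $\Omega\setminus\overline{B(z,cr)}$---specifically $|\Omega_x|^{1/n}\le Ccr$ together with a reverse inequality obtained by a dyadic iteration in the radius---and LLC(2) is extracted from those. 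Second, the extra topological hypothesis is invoked \emph{only} to import Buckley--Koskela's separation property, a purely geometric statement about curves in $\Omega$; combining LLC(2) with the separation property then yields the John condition directly.

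Your sketch misses both steps. The sentence ``the quasiconformal map to a uniform domain transports the test function and distorts the norms in a controlled, quasi-invariant way'' is not correct: Orlicz--Sobolev norms have no useful quasi-invariance under QC maps, and the paper never moves any analysis to the target uniform domain---the QC hypothesis serves solely to guarantee the separation property in $\Omega$ itself. And without the LLC(2) lemma your contradiction scheme does not close, because the bare negation of the John condition does not hand you the quantitative neck geometry (two pieces of definite measure joined through an arbitrarily thin corridor) that your test-function computation presupposes.
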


\begin{remark}\rm
(i) For special Young function $\Phi$, 
then $(\phi_n, \phi)$-Poincar\'e inequality \eqref{ueq1} is the classical Sobolev $\dot{W}^{1,p}$-imbedding.
Letting $\Phi=t^p$ for some $p \in[1, n)$,
we know $L^\phi(\Omega)=L^p(\Omega)$ and $\phi_{n}(t)=C t^{\frac{np}{n-p}}$.
Then $(\phi_n, \phi)$-Poincar\'e inequality \eqref{ueq1} equals to
Sobolev $\dot{W}^{1,p}$-imbedding or $(\frac{np}{n-p},p)$-Poincar\'e inequality. Namely, 
for any  $u \in \dot{ W }^{1, p}(\Omega)$, there exists a constant $C>0$ such that
\begin{align}\label{1p}
\|u- u_{\Omega}\|_{L^{np/(n-p)}(\Omega)} \leq C \|u\|_{\dot{ W }^{1, p}(\Omega)},
\end{align}
where the constant $C$ depends on $n,p$ and $c$.
Noted that $c$-John domain  $\Omega$ supports $(\frac{np}{n-p},p)$-Poincar\'e inequality,
details see Reshetnyak \cite{r83} and Martio \cite{m88} for $1<p<n$ and
Borjarski \cite{b89} (and also Hajlasz \cite{h01}) for $p=1$.
On the other hand,  additionally assume that $\Omega$ is a
bounded simply connected planar domain or a domain that is quasiconformally equaivalently to
some uniform domain when $n\ge3$,
Buckley and Koskela \cite{bk95} proved that if   \eqref{1p} holds,
then $\Omega$ is a $c$-John domain.
The Haj\l sz version see  \cite{z11}.
The results are the same as Theorem \ref{th1}.

(ii) This theorem describes the geometric features of the embeddings more precisely than Heikkinen and Tuominen \cite{refh1}.
They proved that if $\Phi \in \Delta_2$ 
with $\frac{\Phi(t)}{t^{p}}$ decreasing for $1\leq p< n$ and $\dot{W}^{1, \phi}(\Omega) \subset L^{\phi_{n}}(\Omega)$, then $\Omega$ satisfies the measure density condition. 
Actually, when $\frac{\Phi(t)}{t^{p}}$ is decreasing for $1\leq p< n$, then $\frac{\Phi(2t)}{(2t)^{p}}\leq \frac{\Phi(t)}{t^{p}}$ for all $t>0$. 
In other words,  $\frac{\Phi(2t)}{(\Phi(t)}\leq 2^{p}<2^{n}$.
It means that $K_\phi<2^{n}$. 
Moreover, from Buckley and Koskela \cite{bsk96}, we knew that a John domain satisfies the measure density condition.
Hence in some terms, we extend the result that we characterize the  geometric features of the $(\phi_n, \phi)$-Poincar\'e inequality .

(iii) John domain characterize many embeddings, such us the intrinsic fractional Sobolev embedding $\dot{W}_{*}^{s,p}(\Omega ) \subset L^{np/n-sp}(\Omega )$ for $s \in (0,1)$ in \cite{h13}, 
the Haj\l sz version of fractional Sobolev embedding in \cite{z123},
the Orlicz-Besov embedding in \cite{refsh1},  the fractional Orlicz-Sobolev embedding in \cite{reflt3} and so on.
\end{remark}

The paper is organized as follows.
The proof of  Theorem \ref{th1}(i)  proven in Section 2  is similarly to \cite{reflt3} by the different embedding $\dot{W}^{1, \phi}(Q)\subset L^{\phi_{n}}(Q)$ for cube $Q$ and the vector-valued inequality in Orlicz norms for the Hardy-Littlewood maximum operators. 
Section 2 also contains some property of the doubling Young function.
Conversely,   together with the aid of some ideas from \cite{bk95,  refh1,reflt3, refsh1, z123},  
we obtain the $LLC(2)$ property of $\Omega$,  and then
 prove Theorem \ref{th1}(ii) by a capacity argument;
see Section 3 for details.

\section{Proof of Theorem 1.2(i)}\label{s2}
First we give the embedding $C_c^\infty(\Omega) \subset\dot{W}^{1, \phi}(\Omega)$, 
which means that $\dot{W}^{1, \phi}(\Omega)$ contains basic functions.
In some terms,  $\dot{W}^{1, \phi}(\Omega)$ is useful.
\begin{lemma}\label{s2.l1}
Let $\phi$ be a  Young function. For any bounded domain $\Omega \subset \mathbb{R}^n$,  we have $C_c^\infty(\Omega) \subset \dot{W}^{1, \phi}(\Omega)$.
\end{lemma}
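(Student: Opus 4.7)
The plan is to verify directly from the Luxemburg-norm definition that the homogeneous seminorm $\|\nabla u\|_{L^\phi(\Omega)}$ is finite whenever $u\in C_c^\infty(\Omega)$. The key observations are that $\nabla u$ is continuous and compactly supported, hence bounded, and that on a bounded domain a bounded function sits inside every Orlicz space.

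More precisely, I would first fix $u\in C_c^\infty(\Omega)$ and set $M:=\|\nabla u\|_{L^\infty(\Omega)}$, which is finite since $\nabla u$ is continuous with compact support in $\Omega$. Next, I would use the standing properties of a Young function: $\phi$ is nondecreasing on $[0,\infty)$ (being convex with $\phi(0)=0$), and $\phi$ is continuous at $0$ with $\phi(0)=0$. Therefore for any $\lambda>0$,
\begin{equation*}
\int_\Omega \phi\!\left(\frac{|\nabla u(x)|}{\lambda}\right)dx \le \phi\!\left(\frac{M}{\lambda}\right)|\Omega|.
\end{equation*}
Since $\Omega$ is bounded, $|\Omega|<\infty$, and by continuity of $\phi$ at $0$ there exists $\lambda_0>0$ large enough that $\phi(M/\lambda_0)|\Omega|\le 1$. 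This $\lambda_0$ is admissible in the infimum defining the Luxemburg norm, giving $\|\nabla u\|_{L^\phi(\Omega)}\le\lambda_0<\infty$, i.e.\ $u\in\dot W^{1,\phi}(\Omega)$.

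There is no real obstacle here; the statement is a routine compatibility check between the test-function class and the Orlicz-Sobolev norm. The only mild subtlety worth flagging is that one cannot use a growth estimate like $\phi(t)\le C t^{p}$ (which need not hold without the $\Delta_2$ assumption invoked later in the paper): the argument must instead rely solely on the monotonicity and continuity at $0$ of a general Young function, which is why one chooses $\lambda$ large and exploits that $\phi(M/\lambda)\to 0$ as $\lambda\to\infty$.
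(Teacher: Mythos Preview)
Your proof is correct and follows essentially the same approach as the paper's: bound $\int_\Omega\phi(|\nabla u|/\lambda)\,dx$ by $\phi(M/\lambda)$ times a finite measure, then choose $\lambda$ large enough to make this at most $1$. The only cosmetic differences are that the paper restricts the integral to a bounded neighborhood $W$ of $\mathrm{supp}\,u$ (rather than all of $\Omega$) and writes down an explicit admissible $\lambda$ via $\phi^{-1}$ instead of invoking continuity of $\phi$ at $0$.
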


\begin{proof}
Write $L:=\|Du\|_{L^\infty(\Omega)}$ and
choose $\mathrm{supp }u \subset W\subset\Omega$ such that $|\nabla u(x)|=0$ for $x\in \Omega \backslash W$.

For any $ u\in C_c^1(\Omega) $, we know
\begin{align*}
H:=\int_\Omega\phi\left(\frac{|\nabla u(x)|}{\lambda}\right)dx
\leq\int_W\phi\left(\frac{L}{\lambda }\right)dx
=\phi\left(\frac{L}{\lambda }\right)|W|.
\end{align*}
If $\lambda=(L+1)\left({\phi}^{-1}\left(\frac{1}{|W|}\right)\right)^{-1},$ we have $H\leq 1$.
Hence $u\in\dot{W}^{1,\phi}(\Omega)$.
Moreover, we obtain $C_c^\infty(\Omega) \subset  C_c^1(\Omega)$, which is the desired result.
\end{proof}

Now we give some lemmas  of Young function $\phi$ with the doubling property.
\begin{lemma}\label{s2.l2}
 Let $\phi \in \Delta_2$ be a Young function satisfying  $K_\phi<2^{n}$, then $\phi$ satisfy \eqref{eq2}.
\end{lemma}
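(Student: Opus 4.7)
The plan is to extract a quantitative power-type lower bound on $\phi$ from the doubling condition $K_\phi < 2^n$, and then verify that the integrand in \eqref{eq2} is dominated by an integrable power of $\tau$ near $0$.

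First I would set $\beta := \log_2 K_\phi$, so that the hypothesis $K_\phi < 2^n$ is equivalent to $\beta < n$. The doubling condition $\phi(2t) \le K_\phi \phi(t)$ may be iterated: for any integer $k \ge 0$, $\phi(t) \le K_\phi^{k+1} \phi(t/2^{k+1})$. Given $t > 0$ fixed and any $\tau \in (0,t]$, I would pick the unique integer $k \ge 0$ with $t/2^{k+1} < \tau \le t/2^{k}$, so that $k \le \log_2(t/\tau) < k+1$. Since $\phi$ is non-decreasing (as it is convex with $\phi(0) = 0$),
\begin{equation*}
\phi(\tau) \ge \phi(t/2^{k+1}) \ge K_\phi^{-(k+1)} \phi(t) = 2^{-\beta(k+1)}\phi(t) \ge K_\phi^{-1}(\tau/t)^{\beta}\,\phi(t),
\end{equation*}
where in the last step I used $k+1 \le \log_2(t/\tau) + 1$. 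This is the key pointwise lower bound.

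Rearranging yields $\tau/\phi(\tau) \le K_\phi\,t^{\beta}/\phi(t)\cdot \tau^{1-\beta}$ for all $\tau \in (0,t]$, so that
\begin{equation*}
\left(\frac{\tau}{\phi(\tau)}\right)^{\frac{1}{n-1}} \le \left(\frac{K_\phi\,t^{\beta}}{\phi(t)}\right)^{\frac{1}{n-1}}\tau^{\frac{1-\beta}{n-1}}.
\end{equation*}
The exponent $(1-\beta)/(n-1)$ is strictly greater than $-1$ if and only if $\beta < n$, which is precisely the hypothesis $K_\phi < 2^n$. Therefore $\int_0^t \tau^{(1-\beta)/(n-1)}\,d\tau$ is finite, and \eqref{eq2} follows.

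The only point that requires care is the extraction of the power-type bound $\phi(\tau) \gtrsim (\tau/t)^{\beta}\phi(t)$ from the discrete doubling inequality; the correct choice of the dyadic index $k$ and a clean conversion from $K_\phi^{k+1}$ to $(t/\tau)^{\beta}$ is the main technical step, but it is routine once $\beta = \log_2 K_\phi$ is introduced. The rest is an immediate integrability check for a power of $\tau$ near the origin.
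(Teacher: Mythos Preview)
Your proof is correct. Both your argument and the paper's exploit the iterated doubling inequality to produce a geometric decay governed by $K_\phi/2^n<1$, but the packaging differs. The paper decomposes $\int_0^t$ into dyadic annuli $\int_{t/2^m}^{t/2^{m-1}}$, compares consecutive annuli via the substitution $\tau\mapsto 2\tau$, and sums the resulting geometric series; it then bounds the base integral $\int_{t/2}^t$ using that $\tau/\phi(\tau)$ is non-increasing. You instead extract a pointwise power-type lower bound $\phi(\tau)\ge K_\phi^{-1}(\tau/t)^{\beta}\phi(t)$ with $\beta=\log_2 K_\phi<n$, which reduces \eqref{eq2} to the integrability of $\tau^{(1-\beta)/(n-1)}$ near $0$. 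Your route is slightly more direct for the statement at hand; the paper's route has the side benefit that its intermediate estimate $\int_0^t(\tau/\phi(\tau))^{1/(n-1)}\,d\tau\le C\int_{t/2}^t(\tau/\phi(\tau))^{1/(n-1)}\,d\tau$ is reused verbatim in the proof of Lemma~\ref{s2.l3}. That said, your pointwise bound also yields $\int_0^t(\tau/\phi(\tau))^{1/(n-1)}\,d\tau\le C(n,K_\phi)\,t^{n/(n-1)}/\phi(t)^{1/(n-1)}$ after integrating, which is precisely the content of \eqref{te1}, so nothing is lost.
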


\begin{proof}
Since $\phi \in \Delta_2$ with $\phi(2t)\leq K_\phi\phi(t)$,  we have
\begin{align*}
\int_\frac{t}{2}^t\left(\frac{\tau}{\phi(\tau)}\right)^\frac{1}{n-1}d\tau
=2\int_\frac{t}{4}^\frac{t}{2}\left(\frac{2\tau}{\phi(2\tau)}\right)^\frac{1}{n-1}d\tau
\geq 2 \int_\frac{t}{4}^\frac{t}{2}\left(\frac{2\tau}{K_\phi\phi(\tau)}\right)^\frac{1}{n-1}d\tau,
\end{align*}
that is,
\begin{align*}
\int_\frac{t}{4}^\frac{t}{2}\left(\frac{\tau}{\phi(\tau)}\right)^\frac{1}{n-1}d\tau
\leq\frac{K_\phi^\frac{1}{n-1}}{2^\frac{n}{n-1}}
\int_\frac{t}{2}^t\left(\frac{\tau}{\phi(\tau)}\right)^\frac{1}{n-1}d\tau.
\end{align*}
Therefore,
\begin{align*}
\int_\frac{t}{2^m}^\frac{t}{2^{m-1}}\left(\frac{\tau}{\phi(\tau)}\right)^\frac{1}{n-1}d\tau
\leq\frac{K_\phi^\frac{1}{n-1}}{2^\frac{n}{n-1}}\int_\frac{t}{2^{m-1}}^\frac{t}{2^{m-2}}
\left(\frac{\tau}{\phi(\tau)}\right)^\frac{1}{n-1}d\tau
\leq\left(\frac{K_\phi^\frac{1}{n-1}}{2^\frac{n}{n-1}}\right)^{m-1}
\int_\frac{t}{2}^t\left(\frac{\tau}{\phi(\tau)}\right)^\frac{1}{n-1}d\tau.
\end{align*}
Using $K_\phi<2^{n}$, there exists a constant $C>0$ such that $\sum\limits_{m=1}^\infty \left(\frac{K_\phi^\frac{1}{n-1}}{2^\frac{n}{n-1}}\right)^{m-1}\leq C$.
Hence
\begin{align*}
\int_0^t\left(\frac{\tau}{\phi(\tau)}\right)^\frac{1}{n-1}d\tau\leq\sum_{m=1}^\infty \left(\frac{K_\phi^\frac{1}{n-1}}{2^\frac{n}{n-1}}\right)^{m-1}
\int_\frac{t}{2}^t\left(\frac{\tau}{\phi(\tau)}\right)^\frac{1}{n-1}d\tau
\leq C\int_\frac{t}{2}^t\left(\frac{\tau}{\phi(\tau)}\right)^\frac{1}{n-1}d\tau.
\end{align*}
On the other hand, because $\frac{t}{\phi(t)}$ is decreasing, we have
\begin{align*}
\int_\frac{t}{2}^t\left(\frac{\tau}{\phi(\tau)}\right)^\frac{1}{n-1}d\tau
\leq\left(\frac{\frac{t}{2}}{\phi(\frac{t}{2})}\right)^\frac{1}{n-1}\frac{t}{2}<\infty.
\end{align*}
As $t>0$ could be any positive number, we conclude
\begin{align*}
\int_0^t\left(\frac{\tau}{\phi(\tau)}\right)^\frac{1}{n-1}d\tau
\leq C\left(\frac{\frac{t}{2}}{\phi(\frac{t}{2})}\right)^\frac{1}{n-1}\frac{t}{2}<\infty.
\end{align*}
\end{proof}

\begin{lemma}\label{s2.l3}
 Let $\phi \in \Delta_2$ be a Young function satisfying  $K_\phi<2^{n}$. Then there exists a constant
 $C>0$ such that
\begin{equation}\label{te1}
\frac{H(A)}{A}\leq \frac{C }{{\phi(A)}^\frac{1}{n}}.
\end{equation}
\end{lemma}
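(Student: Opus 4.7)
My plan is to run the same tail-bound machinery used in Lemma \ref{s2.l2} and then convert it back to $\phi(A)$ via a single application of the doubling condition. By definition,
\begin{equation*}
H(A)^{\frac{n}{n-1}}=\int_0^A\left(\frac{\tau}{\phi(\tau)}\right)^{\frac{1}{n-1}}d\tau,
\end{equation*}
so it suffices to produce an upper bound for this integral of the form $C\,A^{\frac{n}{n-1}}\phi(A)^{-\frac{1}{n-1}}$; raising to the power $(n-1)/n$ then gives $H(A)\le C A\,\phi(A)^{-1/n}$, which is exactly \eqref{te1}.

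The first step is to recall (from the last two displays in the proof of Lemma \ref{s2.l2}) that
\begin{equation*}
\int_0^A\left(\frac{\tau}{\phi(\tau)}\right)^{\frac{1}{n-1}}d\tau
\le C\left(\frac{A/2}{\phi(A/2)}\right)^{\frac{1}{n-1}}\frac{A}{2},
\end{equation*}
which is available because $K_\phi<2^n$ and because $\tau\mapsto \tau/\phi(\tau)$ is decreasing. Thus the whole integral from $0$ to $A$ is controlled by its last dyadic piece, evaluated at the left endpoint $A/2$.

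The second step is to replace $\phi(A/2)$ by $\phi(A)$ using the doubling condition $\phi(A)\le K_\phi \phi(A/2)$, which yields $\phi(A/2)^{-1/(n-1)}\le (K_\phi/\phi(A))^{1/(n-1)}$. Plugging this into the previous estimate,
\begin{equation*}
H(A)^{\frac{n}{n-1}}\le C\,\frac{A}{2}\cdot\frac{(A/2)^{\frac{1}{n-1}}}{\phi(A/2)^{\frac{1}{n-1}}}
\le C\,A^{1+\frac{1}{n-1}}\phi(A)^{-\frac{1}{n-1}}
= C\,A^{\frac{n}{n-1}}\phi(A)^{-\frac{1}{n-1}}.
\end{equation*}
Taking the $(n-1)/n$-th power of both sides produces $H(A)\le C\,A\,\phi(A)^{-1/n}$, i.e. $H(A)/A\le C\phi(A)^{-1/n}$, as required.

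There is really no serious obstacle here: the doubling condition $K_\phi<2^n$ has already done the heavy lifting inside Lemma \ref{s2.l2}, and what remains is bookkeeping of exponents $\tfrac{1}{n-1}$ vs.\ $\tfrac{n}{n-1}$ vs.\ $\tfrac{1}{n}$. The only point to watch is that the constant $C$ absorbs a power of $K_\phi$ (and hence depends on $K_\phi$ and $n$ but not on $A$), so the estimate is uniform in $A>0$.
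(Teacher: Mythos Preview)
Your proof is correct and follows essentially the same approach as the paper: invoke the dyadic tail estimate from Lemma~\ref{s2.l2} to control $\int_0^A(\tau/\phi(\tau))^{1/(n-1)}\,d\tau$ by $C\,(A/2)^{1/(n-1)}\phi(A/2)^{-1/(n-1)}\cdot(A/2)$, then use the doubling condition once to replace $\phi(A/2)$ by $\phi(A)$, and finally read off the exponents. The only cosmetic difference is that the paper takes the $(n-1)/n$-th power before applying doubling, whereas you do it afterwards.
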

\begin{proof}
Applying Lemma \ref{s2.l2},
\begin{equation*}
\int_0^t\left(\frac{\tau}{\phi(\tau)}\right)^\frac{1}{n-1}d\tau
\leq C\int_\frac{t}{2}^t\left(\frac{\tau}{\phi(\tau)}\right)^\frac{1}{n-1}d\tau
\leq C\left(\frac{\frac{t}{2}}{\phi(\frac{t}{2})}\right)^\frac{1}{n-1}\frac{t}{2}.
\end{equation*}
Together with $K_\phi<2^{n}$, we have
\begin{align*}
H(A)
&=\left[\int_0^A\left(\frac{\tau}{\phi(\tau)}\right)^\frac{1}{n-1}d\tau\right]^\frac{n-1}{n}
\leq \left[C\left(\frac{\frac{A}{2}}{\phi(\frac{A}{2})}\right)^\frac{1}{n-1}\frac{A}{2}\right]
^\frac{n-1}{n}
\leq
\left[C\left(\frac{\frac{A}{2}}{\frac{1}{K_\phi}\phi(A)}\right)^\frac{1}{n-1}\frac{A}{2}\right]
^\frac{n-1}{n}
\leq \frac{CA}{{\phi(A)}^\frac{1}{n}}.
\end{align*}
Hence we get
\begin{align*}
\frac{H(A)}{A}
\leq\frac{C}{{\phi(A)}^\frac{1}{n}}
\end{align*}
as desired.
\end{proof}

The Young function $\phi$ is in  $\nabla_{2}$ ($\phi \in \nabla_2$) if there exists a  constant $a>1$  such that
\begin{equation*}
\phi(x)\leq\frac{1}{2a}\phi(ax), ~\forall x\geq0.
\end{equation*}
\begin{lemma}\label{s2.l4}
If $\phi \in \Delta_2$ be a Young function satisfying $K_{\phi}<2^{n}$, then $\phi_n \in \Delta_2\cap\nabla_2$.
\end{lemma}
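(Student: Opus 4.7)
The plan is to first establish a two-sided comparison
\[
c\,\frac{t}{\phi(t)^{1/n}} \le H(t) \le C\,\frac{t}{\phi(t)^{1/n}}.
\]
The upper bound is Lemma \ref{s2.l3}. For the matching lower bound I would use that $\tau \mapsto \tau/\phi(\tau)$ is nonincreasing (since $\phi$ is convex with $\phi(0)=0$), which yields
\[
H(t)^{n/(n-1)} \ge \int_{t/2}^{t}\left(\frac{\tau}{\phi(\tau)}\right)^{1/(n-1)} d\tau \ge \frac{t}{2}\left(\frac{t}{\phi(t)}\right)^{1/(n-1)}.
\]

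Next I would extract power-type growth bounds for $\phi$ itself. Iterating the doubling property yields $\phi(\lambda t) \le C\,\lambda^{p}\,\phi(t)$ for all $\lambda \ge 1$, where $p := \log_2 K_\phi < n$, while the convexity of $\phi$ with $\phi(0) = 0$ delivers the complementary inequality $\phi(\lambda t) \ge \lambda\,\phi(t)$. Feeding these two estimates into the two-sided comparison for $H$ sandwiches
\[
c_1\, \lambda^{(n-p)/n} \le \frac{H(\lambda t)}{H(t)} \le c_2\, \lambda^{(n-1)/n} \qquad (\lambda \ge 1),
\]
with both exponents strictly between $0$ and $1$ precisely because $K_\phi < 2^{n}$.

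With this sandwich in hand the two claims reduce to elementary bookkeeping. For $\phi_n \in \Delta_2$, set $t = H^{-1}(s)$ and $\lambda t = H^{-1}(2s)$; then $H(\lambda t)/H(t) = 2$ together with the lower half of the sandwich forces $\lambda \le (2/c_1)^{n/(n-p)}$, a constant depending only on $n$ and $K_\phi$, so that $\phi_n(2s)/\phi_n(s) = \phi(\lambda t)/\phi(t) \le C\,\lambda^{p}$ is bounded. For $\phi_n \in \nabla_2$, set $\lambda t = H^{-1}(as)$; then $H(\lambda t)/H(t) = a$ together with the upper half of the sandwich forces $\lambda \ge (a/c_2)^{n/(n-1)}$, and convexity gives
\[
\frac{\phi_n(as)}{\phi_n(s)} = \frac{\phi(\lambda t)}{\phi(t)} \ge \lambda \ge \left(\frac{a}{c_2}\right)^{n/(n-1)}.
\]
Since $n/(n-1) > 1$, choosing $a$ sufficiently large (depending only on $n$ and $K_\phi$) makes the right-hand side exceed $2a$, which is the $\nabla_2$ condition.

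The only real obstacle is propagating the multiplicative constants from the two-sided bound on $H$ through the inversion $t = H^{-1}(s)$ cleanly; no deep new input is required. The decisive qualitative ingredient throughout is the strict inequality $K_\phi < 2^n$, which keeps the exponents $(n-p)/n$ and $(n-1)/n$ bounded away from both $0$ and $1$ and makes the sandwich quantitatively effective in both directions simultaneously.
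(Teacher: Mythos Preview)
Your plan is correct and all the key estimates go through as you describe: the lower bound $H(t)\ge 2^{-(n-1)/n}\,t/\phi(t)^{1/n}$ follows from the monotonicity of $\tau/\phi(\tau)$ exactly as you indicate, the power bounds $\lambda\,\phi(t)\le\phi(\lambda t)\le K_\phi\,\lambda^{p}\phi(t)$ are standard, and the inversion argument is clean because $H$ is strictly increasing.

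The paper takes a different, somewhat more direct route. Rather than first proving the pointwise two-sided comparison $H(t)\sim t/\phi(t)^{1/n}$, it establishes scaling inequalities for $H$ directly by change of variables in the defining integral: from $\phi(2\tau)\le K_\phi\,\phi(\tau)$ one gets $H(2t)\ge 2K_\phi^{-1/n}H(t)$, and from the monotonicity of $\tau/\phi(\tau)$ one gets $H(2^{n}t)\le 2^{n-1}H(t)$. Inverting the first (after an iteration, since $K_\phi^{1/n}<2$) gives $H^{-1}\in\Delta_2$ and hence $\phi_n=\phi\circ H^{-1}\in\Delta_2$; inverting the second gives $2^{n}H^{-1}(s)\le H^{-1}(2^{n-1}s)$, and convexity of $\phi$ then yields $2^{n}\phi_n(s)\le\phi_n(2^{n-1}s)$, i.e.\ $\nabla_2$ with the explicit choice $a=2^{n-1}$. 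The paper's argument is shorter and, for the $\nabla_2$ half, produces a constant $a$ independent of $K_\phi$. Your approach, by contrast, packages the analysis into the single comparison $H(t)\sim t/\phi(t)^{1/n}$, which is a useful standalone estimate and makes the mechanism (power-type growth of $H$ with exponents strictly between $0$ and $1$) more transparent, at the cost of constants that depend on $K_\phi$ throughout.
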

\begin{proof}
Write
\begin{align*}
H(2t)=\left[\int_0^{2t}\left(\frac{\tau}{\phi(\tau)}\right)^\frac{1}{n-1}d\tau\right]^\frac{n-1}{n}
\geq\left[\int_0^t\left(\frac{2\tau}{K_{\phi}\phi(\tau)}\right)^\frac{1}{n-1}2d\tau\right]^\frac{n-1}{n}
=\frac{2}{K_{\phi}^\frac{1}{n}}H(t).
\end{align*}
Letting $2y=H(2t)$,  we know $K_{\phi}^\frac{1}{n}y\geq H\left(\frac{H^{-1}(2y)}{2}\right)$.
Therefore,
$$H^{-1}(2y)\leq2H^{-1}(K_{\phi}^\frac{1}{n}y)\leq2^2H^{-1}\left(K_{\phi}^\frac{1}{n}\frac {K_{\phi}^\frac{1}{n}}{2}y\right)\leq...\leq2^{m+1}H^{-1}\left(K_{\phi}^\frac{1}{n}\left(\frac {K_{\phi}^\frac{1}{n}}{2}\right)^my\right).$$
Because of the range of K,  we get $\frac{K_{\phi}^\frac{1}{n}}{2}<1$.
Putting $m$  big enough so that
$K_{\phi}^\frac{1}{n}\left(\frac {K_{\phi}^\frac{1}{n}}{2}\right)^m<1$,
 we have $H^{-1}(2y)<CH^{-1}(y)$.
Hence  $H^{-1}\in \Delta_2$  and $\phi_n=\phi \circ H^{-1}\in \Delta_2$.

By the decreasing property of $\frac{\tau}{\phi(\tau)}$,
\begin{align*}
H(2^nx)&=\left(\int_0^{2^n x}\left(\frac{\tau}{\phi(\tau)}\right)^\frac{1}{n-1}d\tau\right)
^\frac{n-1}{n}
=\left(\int_0^x\left(\frac{2^n\tau}{\phi(2^n\tau)}\right)^\frac{1}{n-1}
2^n d\tau
\right)^\frac{n-1}{n}\\
&\leq\left(\int_0^x\left(\frac{\tau}{\phi(\tau)}\right)^\frac{1}{n-1}2^n d\tau\right)^\frac{n-1}{n}
=2^{n-1}H(x).
\end{align*}
Hence $2^nx\leq H^{-1}(2^{n-1}H(x))$, it means that $2^n H^{-1}(x)\leq H^{-1}(2^{n-1}x)$.
Moreover,
 \begin{align*}
2^n\phi \circ H^{-1}(x)\leq \phi(2^n H^{-1}(x))\leq\phi \circ H^{-1}(2^{n-1}x).
\end{align*}
Letting $a=2^{n-1}>1$, we have $\phi_n(x)\leq\frac{1}{2a}\phi_n(ax)$ and $\phi_n\in \nabla_2$.
\end{proof}

To prove Theorem \ref{th1}(i), we also need the following result to get the estimate of the embedding of cube.
\begin{lemma}\label{s2.l5}
Let $\phi$ be a Young function satisfying \eqref{eq2}.
Then for any cube $Q \subset \rn$,
$u \in {\dot {W}}^{1,\phi}(Q)$ and $ \lambda\geq C_1\|\nabla u\|_{L^\phi(Q)}$,
there exists a constant $C_1=C_1(n)>0$ such that
\begin{equation}\label{eq2.1}
\int_Q \phi_n\left(\frac{|u(x)-u_Q|}{\lambda}\right)dx\leq\int_Q \phi\left(\frac{C_1|\nabla u(x)|}{\lambda}\right)dx.
\end{equation}
\end{lemma}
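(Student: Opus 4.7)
The plan is to reduce the asserted inequality to the \emph{modular} form of Cianchi's Sobolev--Poincar\'e estimate on the cube $Q$, and then deduce the latter from Theorem 1.1 via a homogeneity (rescaling) argument. The hypothesis $\lambda \ge C_1 \|\nabla u\|_{L^\phi(Q)}$ in the statement is exactly what puts the rescaled gradient into the Luxemburg unit ball, which is what makes the modular inequality applicable.

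First I would argue that \eqref{eq2.1} is equivalent to the following normalized statement: there exists $C_1 = C_1(n)>0$ such that, for every $v \in \dot W^{1,\phi}(Q)$ with $\int_Q \phi(|\nabla v|)\,dx \le 1$, one has $\int_Q \phi_n(|v - v_Q|/C_1)\,dx \le \int_Q \phi(|\nabla v|)\,dx$. Indeed, given $\lambda \ge C_1 \|\nabla u\|_{L^\phi(Q)}$, the choice $v := (C_1/\lambda) u$ satisfies $\|\nabla v\|_{L^\phi(Q)} \le 1$, hence $\int_Q \phi(|\nabla v|)\,dx \le 1$; substituting $v = (C_1/\lambda)u$ back into the modular inequality, together with $v_Q = (C_1/\lambda) u_Q$ and $\nabla v = (C_1/\lambda)\nabla u$, recovers \eqref{eq2.1} after elementary rearrangement.

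To establish the normalized modular inequality, I would combine two classical ingredients. The first is the pointwise rearrangement bound on cubes due essentially to Maz'ya and Alvino--Trombetti: for every $u \in W^{1,1}(Q)$ and every $s \in (0, |Q|/2)$,
\begin{equation*}
(u - u_Q)^*(s) \le c_n \int_s^{|Q|} |\nabla u|^*(r)\, r^{1/n - 1}\,dr,
\end{equation*}
where $c_n$ depends only on $n$. This converts the modular Sobolev estimate into a boundedness statement for the one-dimensional Hardy-type operator $Tg(s) := \int_s^{|Q|} g(r)\, r^{1/n - 1}\,dr$, whose associated Young function is precisely $\phi_n = \phi \circ H^{-1}$ from \eqref{eq5}. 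The second ingredient is Cianchi's modular estimate for $T$, which (under \eqref{eq2}, valid here thanks to Lemma \ref{s2.l2}) yields some $C_1 = C_1(n)$ with
\begin{equation*}
\int_0^{|Q|} \phi_n\lf(\frac{T g(s)}{C_1}\r) ds \le \int_0^{|Q|} \phi(g(s))\,ds
\end{equation*}
whenever $\int_0^{|Q|} \phi(g)\,ds \le 1$. Applied with $g = |\nabla u|^*$ and combined with equimeasurability of $|\nabla u|$ and $|\nabla u|^*$, this delivers the normalized modular inequality.

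The hard part is the modular Hardy estimate above, which is genuinely stronger than the Luxemburg-norm bound implicit in Theorem 1.1. Its proof relies on the precise relation $\phi_n = \phi \circ H^{-1}$, the doubling properties $\phi, \phi_n \in \Delta_2$ from Lemmas \ref{s2.l2} and \ref{s2.l4}, and the growth envelope $H(A)/A \le C\phi(A)^{-1/n}$ of Lemma \ref{s2.l3}; these are exactly the analytic facts already developed in this section. Once the modular Hardy estimate is in hand, the rescaling step is routine and $C_1$ depends only on $n$ (with implicit dependence on $K_\phi$ through the $\Delta_2$- and $\nabla_2$-constants of $\phi_n$ from Lemma \ref{s2.l4}).
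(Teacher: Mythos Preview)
Your approach is correct in spirit but differs substantially from the paper's. The paper does \emph{not} re-derive the modular Sobolev--Poincar\'e inequality from rearrangement and Hardy-type bounds; instead it quotes Theorem~4.3 of Cianchi~\cite{refc1} (with $m=1$, $A=\phi$, $\Omega=Q(O,1)$), which already yields
\[
\int_{Q(0,1)}\phi_n\!\left(\frac{|u-u_{Q(0,1)}|}{C_1\bigl(\int_{Q(0,1)}\phi(|\nabla u|)\bigr)^{1/n}}\right)dx
\le \int_{Q(0,1)}\phi(|\nabla u|)\,dx,
\]
and then transfers this to an arbitrary cube $Q$ by an affine change of variables $v(x)=u(T^{-1}(lx)+a)/(\lambda l)$. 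The hypothesis $\lambda\ge C_1\|\nabla u\|_{L^\phi(Q)}$ is used only in the last line, to drop the factor $\bigl(\int_Q\phi(C_1|\nabla u|/\lambda)\bigr)^{1/n}\le 1$ from the denominator. So the paper's argument is a two-step black-box invocation plus scaling, whereas you sketch the \emph{proof} of the black box itself via the Alvino--Trombetti/Maz'ya rearrangement bound and the one-dimensional Hardy operator $Tg(s)=\int_s^{|Q|}g(r)r^{1/n-1}dr$. Your route is more self-contained and explains \emph{why} $\phi_n=\phi\circ H^{-1}$ is the right target space; the paper's route is shorter and keeps the dependence on $n$ transparent.

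One inaccuracy worth flagging: you say the modular Hardy estimate ``relies on the doubling properties $\phi,\phi_n\in\Delta_2$ from Lemmas~\ref{s2.l2} and~\ref{s2.l4}'' and that $C_1$ carries ``implicit dependence on $K_\phi$''. But the lemma is stated under hypothesis~\eqref{eq2} alone, with no doubling assumption, and Cianchi's modular inequality in \cite{c96,refc1} is proved under just~\eqref{eq2}; doubling is not needed here, and $C_1$ genuinely depends only on $n$. Your invocation of Lemmas~\ref{s2.l2}--\ref{s2.l4} is therefore superfluous for this step (those lemmas are used elsewhere in the paper, not in the proof of Lemma~\ref{s2.l5}).
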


\begin{proof}
Let $m=1,A=\phi$ and $\Omega=Q(O,1)$,
where $$Q(a,b)=\left\{(x_1,x_2,...,x_n)\in\mathbb{R}^n:|x_i-a_i|<b, a=(a_1,a_2,...,a_n)\in\mathbb{R}^n,b>0\right\}.$$
By theorem 4.3 in \cite{refc1}, there exists constants $C_1$ and $c$ such that
\begin{align*}
\int_{Q(0,1)}\phi_n\left(\frac{|u(x)-c|}{C_1\left(\int_{Q(0,1)}\phi(|\nabla u(x)|)dx\right)^\frac{1}{n}}\right)dx
\leq\int_{Q(0,1)}\phi\left(|\nabla u(x)|\right)dx.
\end{align*}
In fact, the above inequality holds if $c=u_{Q(0,1)}$.

If a cube centered at $a$ with sides of length $2l$ paralleled to the axes,
there exists an orthogonal transformation $T$ such that $T(Q-a)=Q(O,l)$.
For any $ u \in {\dot {W}}^{1,\phi}(Q)$,
 put $v(x)=\frac{u(T^{-1}(lx)+a)}{\lambda l}$ where $x\in Q(0,1)$.
Then $v\in {\dot {W}}^{1,\phi}(Q(0,1))$.
Hence
\begin{align*}
\int_{Q(0,1)}\phi_n\left(\frac{|\frac{u(T^{-1}(lx)+a)}{\lambda l}-c|}{C_1\left[\int_{Q(0,1)}\phi\left(l\left|\nabla\left(\frac{u(T^{-1}(lx)+a)}{\lambda l}\right)\right|\right)dx\right]^\frac{1}{n}}\right)dx
\leq\int_{Q(0,1)}\phi\left(l\left|\nabla\left(\frac{u(T^{-1}(lx)+a)}{\lambda l}\right)\right|\right)dx.
\end{align*}
Using $y=T^{-1}(lx)+a$, we know
\begin{align*}
\int_{Q}\phi_n\left(\frac{|\frac{u(y)}{\lambda l}-c|}{C_1\left[\int_{Q}\phi\left(l\left|\nabla\left(\frac{u(y)}{\lambda l}\right)\right|\right)\frac{dy}{l^n}\right]^\frac{1}{n}}\right)\frac{dy}{l^n}
\leq\int_{Q}\phi\left(l\left|\nabla\left(\frac{u(y)}{\lambda l}\right)\right|\right)\frac{dy}{l^n},
\end{align*}
that is,
\begin{align*}
\int_{Q}\phi_n\left(\frac{|u(y)-\lambda lc|}{C_1\lambda\left(\int_{Q}\phi\left(\frac{|\nabla u(y)|}{\lambda}\right)dy\right)^\frac{1}{n}}\right)dy
\leq\int_{Q}\phi\left(\frac{|\nabla u(y)|}{\lambda}\right)dy.
\end{align*}
Since $c=v_{Q(O,1)}$, we get $\lambda lc=u_Q$.
By variable substitution $u=C_1u$, we have
\begin{align*}
\int_{Q}\phi_n\left(\frac{|u(y)-u_Q|}{\lambda(\int_{Q}\phi\left(\frac{C_1|\nabla u(y)|}{\lambda}\right)dy)^\frac{1}{n}}\right)dy
\leq\int_{Q}\phi\left(\frac{C_1|\nabla u(y)|}{\lambda}\right)dy.
\end{align*}
If $\lambda\geq C_1\|\nabla u\|_{L^\phi(Q)}$, then we get
\begin{align*}
\int_{Q}\phi_n\left(\frac{|u(y)-u_Q|}{\lambda}\right)dy&\leq
\int_{Q}\phi_n\left(\frac{|u(y)-u_Q|}{\lambda\left[\int_{Q}\phi\left(\frac{C_1|\nabla u(y)|}{\lambda}\right)dy\right]^\frac{1}{n}}\right)dy\\&
\leq\int_{Q}\phi(\frac{C_1|\nabla u(y)|}{\lambda})dy \leq 1
\end{align*}
as desired.
\end{proof}

Recalled the Fefferman-Stein type vector-valued inequality for Hardy-Littlewood maximum operator in Orlicz space. Denote by $\mathcal{M}$ the Hardy-Littlewood maximum operator,
\begin{equation*}
\mathcal{M}(g)(x)=\sup_{x\in Q}\fint_{Q}|g|dx
\end{equation*}
with the supremum taken over all cubes $Q \subset \mathbb{R}^n$ containing $x$.
Next lemmas would give the property of Hardy-Littlewood maximum operator of the Orlicz-Sobolev Space and chain property.

\begin{lemma}[\cite{cmp11}]\label{le2.4}
Let $\psi \in \Delta_2\cap\nabla_2$ be a Young function. For any $0<q<\infty$,  there exists a constant $C>1$ depending on $n,  q,  K_\psi$ and $a$ such that for all sequences $\{f_j\}_{j\in\mathbb{N}}$ , we have
\begin{equation*}
\int_{\mathbb{R}^n}\psi\left(\left[\sum_{j\in\mathbb{N}}(\mathcal{M}(f_j))^2\right]^\frac{1}{q}\right) dx\leq C(n, K_\psi, a)\int_{\mathbb{R}^n}\psi\left(\left[\sum_{j\in\mathbb{N}}(f_j)^2\right]^\frac{1}{q}\right) dx.
\end{equation*}
\end{lemma}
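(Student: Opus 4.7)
The plan is to prove this vector-valued maximal inequality via the Rubio de Francia off-diagonal extrapolation method adapted to Orlicz spaces, which is the standard route pioneered in \cite{cmp11} and which efficiently reduces the Orlicz estimate to a known weighted $L^{p_0}$ estimate.

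First, I would translate the $\Delta_2 \cap \nabla_2$ hypothesis into a statement about the Boyd indices of the Orlicz space $L^\psi(\rn)$. The $\Delta_2$ condition (encoded by $K_\psi<\infty$) forces the upper Boyd index $\overline{\beta}_\psi$ to be finite, while the $\nabla_2$ condition (with constant $a>1$) forces the lower Boyd index $\underline{\beta}_\psi$ to be strictly greater than $1$. The quantitative bounds on the two indices depend only on $K_\psi$ and $a$, which is what ultimately produces the dependence $C(n,K_\psi,a)$ in the conclusion.

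Next, I would invoke the classical equivalence between the Boyd indices lying in $(1,\infty)$ and the boundedness of the Hardy-Littlewood maximal operator $\mathcal{M}$ on both $L^\psi(\rn)$ and its K\"othe associate space $L^{\psi'}(\rn)$, where $\psi'$ is the complementary Young function. In parallel, I would record the classical weighted Fefferman-Stein vector-valued inequality: for some fixed $p_0\in(1,\infty)$ and every Muckenhoupt weight $w\in A_{p_0}$,
\begin{equation*}
\left\| \left(\sum_{j\in\nn} (\mathcal{M} f_j)^2\right)^{1/q}\right\|_{L^{p_0}(w)} \le C(p_0,q,[w]_{A_{p_0}})\left\| \left(\sum_{j\in\nn} f_j^2\right)^{1/q}\right\|_{L^{p_0}(w)}.
\end{equation*}

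Finally, I would apply the Rubio de Francia extrapolation theorem in the Banach function space setting: once the weighted inequality holds for all $w\in A_{p_0}$ with constant depending only on $[w]_{A_{p_0}}$, and once $\mathcal{M}$ is bounded on both $L^\psi$ and $L^{\psi'}$, the same inequality transfers to the Orlicz space $L^\psi(\rn)$. Specialising to $F=(\sum_j f_j^2)^{1/q}$ and $G=(\sum_j(\mathcal{M}f_j)^2)^{1/q}$ yields the claimed estimate. The main obstacle is the very first step: tracking the explicit link between $(K_\psi,a)$ and quantitative upper/lower bounds on the Boyd indices, so that the $\mathcal{M}$-boundedness constants on $L^\psi$ and $L^{\psi'}$ are controlled by these parameters alone; once this quantitative bridge is in place, the remaining steps follow from well-established extrapolation machinery.
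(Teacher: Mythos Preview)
The paper does not give a proof of this lemma at all: it is stated with the citation \cite{cmp11} and simply quoted as a known result. Your outline via Rubio de Francia extrapolation (translating $\Delta_2\cap\nabla_2$ into Boyd index bounds, boundedness of $\mathcal{M}$ on $L^\psi$ and its associate, the weighted Fefferman--Stein inequality, then extrapolation to the Orlicz setting) is precisely the machinery developed in \cite{cmp11}, so your plan is faithful to the cited source; there is no in-paper argument to compare against.

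One small remark: as stated in the paper the inner exponent is $2$ while the outer exponent is $1/q$, which is almost certainly a typo for the standard Fefferman--Stein form with inner exponent $q$; the paper only ever applies the lemma with $q=2$, so the slip is harmless there, but your weighted input should be stated with matching exponents.
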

\begin{lemma}\label{le2.5}
For any constant $k\geq1$, sequence $\{a_j\}_{j\in\mathbb{N}}$, and cubes $\{Q_j\}_{j\in\mathbb{N}}$ with $\sum_{j}\chi_{Q_j}\leq k$, we have
\begin{equation*}
\sum_{j}|a_j|\chi_{kQ_j}\leq C(k, n)\sum_j[\mathcal{M}(|a_j|^{\frac{1}{2}}\chi_{Q_j})]^2.
\end{equation*}
\end{lemma}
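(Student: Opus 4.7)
The plan is to derive this inequality as an immediate pointwise consequence of the definition of the Hardy--Littlewood maximal operator, bypassing any delicate covering or interpolation machinery. First I would fix an index $j\in\mathbb{N}$ and a point $x\in kQ_j$. Since $kQ_j$ is itself a cube that contains $x$ and contains $Q_j$ (because $k\geq 1$), and since $|kQ_j|=k^n|Q_j|$, bounding $\mathcal{M}$ from below by the average of $|a_j|^{1/2}\chi_{Q_j}$ over $kQ_j$ yields
\begin{equation*}
\mathcal{M}(|a_j|^{1/2}\chi_{Q_j})(x)\;\geq\;\fint_{kQ_j}|a_j|^{1/2}\chi_{Q_j}(y)\,dy
\;=\;\frac{|Q_j|}{|kQ_j|}|a_j|^{1/2}\;=\;k^{-n}|a_j|^{1/2}.
\end{equation*}
Squaring both sides produces the pointwise inequality $|a_j|\chi_{kQ_j}(x)\leq k^{2n}\bigl[\mathcal{M}(|a_j|^{1/2}\chi_{Q_j})(x)\bigr]^2$, which is trivially valid also at points $x\notin kQ_j$ since the left side then vanishes.

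With this pointwise estimate in hand, summing over $j\in\mathbb{N}$ immediately gives the conclusion with explicit constant $C(k,n)=k^{2n}$, independent of the scalars $\{a_j\}$ and of the family $\{Q_j\}$. I do not anticipate any serious obstacle: the only delicate point is checking that the average on the right genuinely equals $k^{-n}|a_j|^{1/2}$, which reduces to the volume ratio $|Q_j|/|kQ_j|=k^{-n}$ together with $Q_j\subset kQ_j$.

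I note that the bounded-overlap hypothesis $\sum_j\chi_{Q_j}\leq k$ is not actually invoked in the argument sketched above, since the pointwise bound is additive in $j$. This assumption is retained in the statement because of the way the lemma is meant to be combined with Lemma \ref{le2.4}: when one substitutes this pointwise control into the vector-valued inequality in Orlicz norms, the bounded overlap of the cubes $\{Q_j\}$ is what allows one to pass from $\sum_j|a_j|^{1/2}\chi_{Q_j}$ back to a controllable single-function integrand. Thus the hypothesis belongs to the intended use of the lemma rather than to the proof of the pointwise inequality itself.
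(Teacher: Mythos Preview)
Your proof is correct and follows essentially the same approach as the paper: both arguments use the pointwise bound $\chi_{kQ_j}\le k^n\mathcal{M}(\chi_{Q_j})$ (equivalently, your lower bound $\mathcal{M}(|a_j|^{1/2}\chi_{Q_j})\ge k^{-n}|a_j|^{1/2}$ on $kQ_j$), square it, and sum over $j$ to obtain the constant $C(k,n)=k^{2n}$. Your observation that the bounded-overlap hypothesis $\sum_j\chi_{Q_j}\le k$ plays no role in the pointwise estimate itself is also accurate; the paper's proof likewise never invokes it.
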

\begin{proof}
By the definition of $\mathcal{M}$, ~we know
\begin{equation*}
\chi_{kQ_j}\leq k^n\mathcal{M}({\chi}_{Q_j}).
\end{equation*}
So
\begin{equation*}
\sum_{j}|a_j|\chi_{kQ_j}=\sum_{j}(|a_j|^\frac{1}{2}\chi_{kQ_j})^2\leq k^{2n}\sum_j[\mathcal{M}(|a_j|^{\frac{1}{2}}\chi_{Q_j})]^2.
\end{equation*}
\end{proof}

Now we begin to prove Theorem \ref{th1}(i).
\begin{proof}[Proof of Theorem \ref{th1}(i)]
Let $\Omega$ be a c-John domain. 
Applying  Boman   \cite{b82} and Buckley \cite{bkl95}, a John domain  $\Omega$ enjoys the following chain property: 
for every integer $\kappa>1$,  there exist a positive constant $C(\kappa,  \Omega)$ and a collection $\mathcal{F}$ of the cubes such that

(i) $Q\subset \kappa Q \subset \Omega$  for all $Q \in \mathcal{F},  \Omega =\cup_{Q\in \mathcal{F}}Q$ and
\begin{align*}
\sum _{Q\in\mathcal{F}}\chi_{\kappa Q}  \leq  C_{\kappa, c} \chi_{\Omega}. 
\end{align*}

(ii) Fix a cube $Q_{0} \in \mathcal{ F} $. For any other $Q \in  \mathcal{F}$,
there exist a subsequence $\{Q_{j}\}_{j=1}^{N} \subset \mathcal{F}$,  satisfying that $Q = Q_{N} \subset C_{\kappa, c}  Q_{j} $,  $C_{\kappa, c}^{-1}|Q_{j+1}| \leq |Q_{j} | \leq C_{\kappa, c}|Q_{j+1}|$ and
 $|Q_{j} \cap Q_{j+1}|\geq C_{\kappa, c}^{-1} \min\{|Q_{j} |,  |Q_{j+1}|\}$ for all $j =0,  \ldots,  N-1$.

Set $\kappa=5n$.  Since  $Q\subset 5nQ \subset \Omega$ for each $Q\in \mathcal{F}$, we get
\begin{align*}
d(Q, \partial\Omega)\geq d(Q, \partial(5nQ)\geq\frac{5n-1}{2}l(Q)\geq2nl(Q).
\end{align*}
Moreover,
\begin{align*}
|x-y|\leq\sqrt{n}l(Q)\leq nl(Q)\leq \frac{1}{2}d(Q, \partial\Omega)\leq\frac{1}{2}d(x, \partial\Omega), ~\forall x, y\in Q\in\mathcal{F}.
\end{align*}

Let $u\in \dot{W}^{1,\phi}(\Omega)$.
Up to  approximating by $\min\{\max\{u, -N\}, N\}$,  we may assume that $u\in L^\infty(\Omega)$,  and by the boundedness of $\Omega$,   $u\in L^1(\Omega)$.

Using the convexity of $\phi_n$,  we have
\begin{align*}
I:&=\int_{\Omega} \phi_n\left(\frac{|u(z)- u_{\Omega}|}{\lambda}\right)dz\\&
\leq\int_{\Omega} \phi_n\left[\frac{1}{2}\left(\frac{2|u(z)- u_{Q_0}|+2|u_{\Omega}- u_{Q_0}|}{\lambda}\right)\right]dz\\&
\leq \frac{1}{2}\left[\int_{\Omega} \phi_n\left(\frac{2|u(z)- u_{Q_0}|}{\lambda}\right)dz+|\Omega|\phi_n\left(\frac{2|u_{\Omega}- u_{Q_0}|}{\lambda}\right)\right].
\end{align*}
By Jensen inequality,
$$|\Omega|\phi_n\left(\frac{2|u_{\Omega}- u_{Q_0}|}{\lambda}\right)\leq\int_{\Omega} \phi_n\left(\frac{2|u(z)- u_{Q_0}|}{\lambda}\right)dz.$$
Since $\chi_\Omega\leq\sum_{Q\in \mathcal{F}}\chi_Q$ as given (i) above,
\begin{align*}
I&\leq\int_{\Omega} \phi_n\left(\frac{2|u(z)- u_{Q_0}|}{\lambda}\right)dz\\&
\leq\sum_{Q\in\mathcal{F}}\int_Q\phi_n\left(\frac{2|u(z)- u_{Q_0}|}{\lambda}\right)dz\\&
\leq\frac{1}{2}\sum_{Q\in\mathcal{F}}\int_Q\phi_n\left(\frac{4|u(z)- u_{Q}|}{\lambda}\right)dz+\frac{1}{2}\sum_{Q\in\mathcal{F}
\setminus\{Q_0\}}|Q|\phi_n \left(\frac{4|u_{Q}- u_{Q_0}|}{\lambda}\right)\\&:=\frac{1}{2}I_1+\frac{1}{2}I_2.
\end{align*}
Then it suffices to show that
\begin{align*}
I_i \leq \int_\Omega\phi\left(\frac{|\nabla u(x)|}{\lambda/C(n,C_{\kappa,c} , K_{\phi})}\right)dx
\mbox{\quad for $i=1,2$.}
\end{align*}

To bound $I_1$,
for any $Q \in \mathcal{F}$, applying inequality \eqref{eq2.1},
\begin{align*}
I_1\leq\sum_{Q\in\mathcal{F}}\int_Q\phi\left(\frac{|\nabla u(x)|}{\frac{\lambda}{4C_1}}\right)dx.
\end{align*}
Together with $\sum \chi_{Q}(x)\leq C_{\kappa,c} \chi_{\Omega}(x)$ as  in (i) above,
by the convexity we know
\begin{align*}
I_1 \leq C_{\kappa,c} \int_\Omega\phi\left(\frac{|\nabla u(x)|}{\frac{\lambda}{4C_1}}\right)dx
\leq\int_\Omega\phi\left(\frac{|\nabla u(x)|}{\lambda/C(n,C_{\kappa,c} )}\right)dx.
\end{align*}

To estimate $I_2$, for each $Q\in \mathcal{F}$, applying the chain property given in (ii) above,
for any $Q\in\cf$ with $Q\ne Q_0$, we obtain
\begin{align*}
|u_{Q}-u_{Q_0}|& \leq \sum\limits _{j=0}^{N-1}  |u_{Q_j}-u_{Q_{j+1}}|\\
&\leq  \sum\limits _{j=0}^{N-1}(  |u_{Q_j}-u_{Q_{j+1}\cap Q_j}|+|u_{Q_{j+1}}-u_{Q_{j+1}\cap Q_j}|).
\end{align*}
For adjacent cubes $Q_{j}, Q_{j+1}$,
one has $|Q_{j} \cap Q_{j+1}|\geq C_{\kz,c}^{-1} \min\{|Q_{j} |, |Q_{j+1}|\}$
and $C_{\kappa,c}^{-1}|Q_{j+1}| \leq |Q_{j} | \leq C_{\kappa,c}|Q_{j+1}|$.
This implies
\begin{align*}
|u_{Q_{j }}- u_{Q_j \cap Q_{j+1}}|
&\leq  \frac{1}{|Q_j \cap Q_{j+1}|}\int_{Q_j}|u(v)-u_{Q_j}|\, dv\\
&\leq  \frac{ C_{\kz,c}}{ \min\{|Q_{j} |, |Q_{j+1}|\}}\int_{Q_j}|u(v)-u_{Q_j}|\, dv\\
&\leq\frac{C^{2}_{\kz,c}}{|Q_j|} \int_{Q_j}|u(v)-u_{Q_j}|\, dv,
\end{align*}
and  also  similar estimate for $|u_{Q_{j+1 }}- u_{Q_j \cap Q_{j+1}}|$.
Therefore we get
\begin{align*}
|u_{Q}-u_{Q_0}|&
\leq 2 C_{\kz,c}^{2}\sum\limits _{i=1}^{N}  \fint_{Q_j}|u(v)-u_{Q_j}|\,dv.
\end{align*}
For each cube $Q_j$,
by Jessen inequality, one has
\begin{align*}
\fint_{Q_j}\frac{|u(v)-u_{Q_{j}}|}{\lambda}dv&={\phi_n}^{-1}\circ\phi_n\left(\fint_{Q_j}\frac{|u(v)-u_{Q_{j}}|}{\lambda}dv\right)\\&
\leq{\phi_n}^{-1}\left(\fint_{Q_j}\phi_n\left(\frac{|u(v)-u_{Q_{j}}|}{\lambda}\right)dv\right).
\end{align*}
Using inequality \eqref{eq2.1},
\begin{align*}\fint_{Q_j}\frac{|u(v)-u_{Q_{j}}|}{\lambda}dv
\leq{\phi_n}^{-1}\left(\int_{Q_j}\phi\left(\frac{|\nabla u(v)|}{\lambda/C_1}\right)dv\right)
:={\phi_n}^{-1}\left(\int_{Q_j}f(v)dv\right).
\end{align*}
Therefore,
\begin{align*}
\frac{4|u_{Q}- u_{Q_0}|}{\lambda}\leq8{C_{\kappa,c}}^2\sum_{j=0}^{N}{\phi_n}^{-1}\left(\fint_{Q_j}f(v)dv\right).
\end{align*}
Since $\phi_n  \in \triangle_{2}$,
we know  $ \phi_n(tx) \geq t^{K_{\phi_n}-1} \phi_n(x)$ for all $t \in [1,\infty)$ and $x \in \rr$.
Together with Lemma \ref{s2.l4},
we get
\begin{align*}
\phi_n\left(8{C_{\kappa,c}}^2\sum_{j=0}^{N}{\phi_n}^{-1}\left(\fint_{Q_j}f(v)dv\right)\right)\leq C(C_{\kappa,c},K_\phi )\phi_n\left(\sum_{j=0}^{N}{\phi_n}^{-1}\left(\fint_{Q_j}f(v)dv\right)\right).
\end{align*}
Applying   $Q = Q_{N} \subset C_{\kappa,c}  Q_{j} $   given in (ii), one has
$$|Q|\phi_n\left(\sum_{j=0}^{N}{\phi_n}^{-1}\left(\fint_{Q_j}f(v)dv\right)\right)\leq\int_Q\phi_n\left(\sum_{P\in \mathcal{F}}{\phi_n}^{-1}\left(\fint_{P}f(v)dv\right)\chi_{C_{\kappa,c}P}\right)(x)dx.$$
By $\sum _{Q\in\mathcal{F}}\chi_{ Q}\leq\sum _{Q\in\mathcal{F}}\chi_{\kappa Q}  \leq  C_{\kappa,c} \chi_{\Omega}$ as given (i) above,
 \begin{align*}
I_2&\leq C(C_{\kappa,c},K_\phi )\sum_{Q\in \mathcal{F}}\int_Q\phi_n\left(\sum_{P\in \mathcal{F}}{\phi_n}^{-1}\left(\fint_{P}f(v)dv\right)\chi_{C_{\kappa,c}P}\right)(x)dx\\&
\leq C(C_{\kappa,c},K_\phi )\int_{\Omega}\phi_n\left(\sum_{P\in \mathcal{F}}{\phi_n}^{-1}\left(\fint_{P}f(v)dv\right)\chi_{C_{\kappa,c}P}\right)(x)dx.
\end{align*}
Using Lemma \ref{le2.5}, we know
\begin{align*}
I_2\leq C(C_{\kappa,c},K_\phi )\int_{\Omega}\phi_n\left(\sum_{P\in \mathcal{F}}\left\{\mathcal{M}\left[\left({\phi_n}^{-1}
(\fint_{P}f(v)dv)\right)^\frac{1}{2}\chi_{P}\right]\right\}^2\right)(x)dx.
\end{align*}
By Lemma \ref{s2.l4}, we know $\phi_n\in \Delta_2\cap\nabla_2$.
Then $\phi_n(t^2)\in \Delta_2\cap\nabla_2$.
Applying Lemma \ref{le2.4} to $q=2$ and $\psi(t):=\phi_n(t^2)$,  we obtain
\begin{align*}
I_2\leq CC(C_{\kappa,c},K_\phi,a )\int_{\Omega}\phi_n\left(\sum_{P\in \mathcal{F}}\left({\phi_n}^{-1}(\fint_{P}f(v)dv)\right)\chi_{P}\right)(x)dx.
\end{align*}
Let $a_{P}=|P|^{\frac{\beta}{n}}\fint_{P} f(v)\,dv$. For each $x \in \Omega$,
by the increasing property of $\phi_n$ and the convexity of $\phi_n$, we have
 \begin{align*}
\phi_n\left(\sum_{P\in \mathcal{F}}\left({\phi_n}^{-1}(a_P)\right)\chi_{P}(x)\right)
&=\phi_n\left(\frac{\sum_{P\in\mathcal{F}}\chi_{P}(x)}{\sum_{P\in\mathcal{F}}\chi_{P}(x)}\sum_{P\in \mathcal{F}}\left({\phi_n}^{-1}(a_P)\right)\chi_{P}(x)\right)\\&
\leq\phi_n\left(\frac{C_{\kappa,c}}{\sum_{P\in\mathcal{F}}\chi_{P}(x)}\sum_{P\in \mathcal{F}}\left({\phi_n}^{-1}(a_P)\right)\chi_{P}(x)\right)\\&
\leq\sum_{P\in\mathcal{F}}\frac{\chi_P(x)}{\sum_{P\in\mathcal{F}}
\chi_{P}(x)}\phi_n(C_{\kappa,c}{\phi_n}^{-1}(a_P)).
\end{align*}
Applying  $ \phi_n(tx) \geq t^{K_{\phi_n}-1} \phi_n(x)$ for all $t \in [1,\infty)$ and $x \in \rr$.
and $\chi_{\Omega} \leq \sum \chi_{Q} $ as given in (i) above, one gets
 \begin{align*}
\phi_n\left(\sum_{P\in \mathcal{F}}\left({\phi_n}^{-1}(a_P)\right)\chi_{P}(x)\right)
&\leq \sum\limits _{P \in \mathcal{F}} \frac{C(C_{\kappa, c},K_\phi) }{\chi_{\Omega}(x)}\phi_{n}\left(  \phi_{n}^{-1}(a_P) \right)\chi_{P}(x)\\
&\leq C(C_{\kappa, c},K_\phi)\sum_{P\in\mathcal{F}}\chi_P(x)a_P.
\end{align*}
Using $\sum \chi_{Q} \leq C_{\kappa,c} \chi_{\Omega}$ again, one gets
\begin{align*}
I_2&\leq C(C_{\kappa,c},K_\phi,a )\int_\Omega\sum_{P\in\mathcal{F}}a_P\chi_P(x)dx\\&
\leq C(C_{\kappa,c},K_\phi,a )\sum_{P\in\mathcal{F}}a_P|P|
=    C(C_{\kappa,c},K_\phi,a )\sum_{P\in\mathcal{F}}\int_P f(v)dv\\&
\leq C(C_{\kappa,c},K_\phi,a)\int_\Omega\phi\left(\frac{|\nabla u(v)|}{\lambda/C_1}\right)dv.
\end{align*}
By the convexity, one has
\begin{align*}
I_2 \leq\int_\Omega\phi\left(\frac{|\nabla u(v)|}{\lambda/C(n,C_{\kappa,c},K_\phi,a)}\right)dv.
\end{align*}
Combing the estimates $I_1$ and $I_2$, we complete the proof.
\end{proof}

\section{Proof of Theorem 1.2 (ii)}\label{s3}
To prove Theorem \ref{th1} (ii), we need the following estimates and Lemmas which would be proved later.

Let $z\in \Omega,  ~d(z, \partial\Omega)\leq m <\diam\Omega$. Denote $\Omega_{z, m}$ by a component of $\Omega\setminus{\overline {B_{\Omega}(z, m)}}$.
For $t>r\geq m$ with $\Omega_{z, m}\neq\varnothing,  $
define $u_{z, r, t}$ in $\Omega$ as
\begin{align}\label{3.1}
u_{z, r, t}(y)=\left\{\begin{array}{ll}
0,& y\in \Omega\setminus [\Omega_{z, m} \setminus B_\Omega(z, r)]\\
\frac{|y-z|-r}{t-r} , \,\,& y\in \Omega_{z, m}\cap [B(z, t)\setminus B(z, r)] \\
1,& y\in\Omega_{z, m} \setminus B_\Omega(z, t),
\end{array}\right.
\end{align}
where $B_\Omega(z, t)=B(z, t)\cap\Omega$.

It's not difficult to know that
$u_{z, r, t}$ is Lipschitz with the Lipschitz constant  $\frac{1}{t-r}$.

\begin{lemma}\label{s3.l1}
Let $\phi$ be a Young function.
For any bounded domain $\Omega\subset\mathbb{R}^n$ and $z\in \Omega$ with $d(z, \partial\Omega)\leq m<\diam\Omega$.
For $t>r\geq m$,  we have $u_{z, r, t}\in \dot {W}^{1, \phi}(\Omega)$  with
$$\|u_{z, r, t}\|_{\dot {W}^{1, \phi}(\Omega)}\leq \left[{\phi}^{-1}\left(\frac{1}{|\Omega_{z, m}\setminus B(z, r)|}\right)(t-r)\right]^{-1}. $$

\end{lemma}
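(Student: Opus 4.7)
The plan is to estimate the Luxemburg norm $\|\nabla u_{z,r,t}\|_{L^{\phi}(\Omega)}$ directly by exhibiting a candidate $\lambda$ and checking the modular inequality. Since the point of the statement is the explicit bound, the argument is short and the main (and only) obstacle is keeping track of where the gradient is supported.

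First, I would record two facts about $u=u_{z,r,t}$ that follow straight from \eqref{3.1}. It is Lipschitz on $\Omega$ with Lipschitz constant $(t-r)^{-1}$ (the only nontrivial gradient estimate is on the annular piece $\Omega_{z,m}\cap[B(z,t)\setminus B(z,r)]$, where $u$ is the rescaled distance $(|y-z|-r)/(t-r)$); consequently $|\nabla u(y)|\le (t-r)^{-1}$ a.e.\ in $\Omega$. Moreover, $\nabla u$ vanishes identically outside $\Omega_{z,m}\cap[B(z,t)\setminus B(z,r)]\subset \Omega_{z,m}\setminus B(z,r)$, because $u$ is constant ($0$ or $1$) on the complement of that annulus.

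Next, set
\begin{align*}
\lambda:=\left[\phi^{-1}\!\left(\frac{1}{|\Omega_{z,m}\setminus B(z,r)|}\right)(t-r)\right]^{-1}.
\end{align*}
With this choice, $\dfrac{|\nabla u(y)|}{\lambda}\le \dfrac{1}{\lambda(t-r)}=\phi^{-1}\!\left(\dfrac{1}{|\Omega_{z,m}\setminus B(z,r)|}\right)$ a.e. Using that $\phi$ is nondecreasing, that $\phi\circ\phi^{-1}$ equals the identity on the relevant range, and that $\nabla u$ is supported in $\Omega_{z,m}\setminus B(z,r)$, I would compute
\begin{align*}
\int_{\Omega}\phi\!\left(\frac{|\nabla u(y)|}{\lambda}\right)dy
&\le \int_{\Omega_{z,m}\setminus B(z,r)}\phi\!\left(\phi^{-1}\!\left(\frac{1}{|\Omega_{z,m}\setminus B(z,r)|}\right)\right)dy \\
&= \frac{|\Omega_{z,m}\setminus B(z,r)|}{|\Omega_{z,m}\setminus B(z,r)|}=1.
\end{align*}

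By the very definition of the Luxemburg norm this yields $\|\nabla u\|_{L^{\phi}(\Omega)}\le \lambda$, which is precisely the claimed bound. In particular $\nabla u\in L^{\phi}(\Omega)$, so $u\in \dot W^{1,\phi}(\Omega)$. The only subtlety to be careful about is the inclusion on supports so that the factor $|\Omega_{z,m}\setminus B(z,r)|$ (rather than the smaller annular measure) appears; this is harmless because it only makes the bound larger, and it is what the right-hand side of the lemma is stated with.
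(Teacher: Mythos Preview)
Your proof is correct and follows essentially the same approach as the paper: both identify that $u_{z,r,t}$ is Lipschitz with constant $(t-r)^{-1}$, note that its gradient is supported in $\Omega_{z,m}\setminus B(z,r)$, and verify the modular inequality for the explicit $\lambda$ appearing on the right-hand side. Your write-up is in fact a bit cleaner in pinpointing the support of $\nabla u$ and in explicitly invoking $\phi\circ\phi^{-1}=\mathrm{id}$, but there is no substantive difference in method.
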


\begin{proof}
Noting that  $u_{z, r, t}$ is Lipschitz with the Lipschitz constant  $\frac{1}{t-r}$,
then $\nabla u_{z,r,t}$ almost exists and $|\nabla u_{z,r,t}|\leq\frac{1}{t-r}$.
By the definition of $u_{z,r,t}$,
we know $|\nabla u_{z,r,t}|=0$ in $\Omega\setminus [\Omega_{z,m} \setminus B_\Omega(z,r)]$
and $\Omega_{z,m} \setminus B_\Omega(z,t)$.
Hence
\begin{align*}
H:&=\int_\Omega\phi\left(\frac{|\nabla u_{z,r,t}(x)|}{\lambda}\right)dx
\leq\int_{\Omega_{z,m}\setminus {B(z,r)}}\phi\left(\frac{1}{\lambda(t-r)}\right)dx.
\end{align*}
 Letting $\lambda \geq \left[{\phi}^{-1}\left(\frac{1}{|\Omega_{z,m}\setminus B(z,r)|}\right)(t-r)\right]^{-1}$, we have  $H \leq 1$ as desired.
\end{proof}

For $x_0 , z \in \Omega$, let $r>0$ such that $d (z, \partial \Omega) <r < |x_0 -z|$. Define
$$
\omega_{x_0, z, r}(y)= \frac{1}{r}\inf \limits _{\gamma(x_0,y)} \ell (\gamma \cap B(z,r)), \quad \forall y \in \Omega ,
$$
where the infimum is taken over all rectifiable curves $\gamma$ joining $x_0$ and $y$.

\begin{lemma}\label{s3.l2}
Let $\phi$  be a Young function. For any bounded domain $\Omega \subset \rn$, $x_0 , z \in \Omega$ and $r>0$ satisfying $d (z, \partial \Omega) <r < |x_0 -z|$, we have $w_{x_0,z,r} \in \dot{ W }^{1,\phi}(\Omega)$ with
$$\|\omega_{x_0,z,r}\|_{\dot{ W }^{1,\phi}(\Omega)}\le C\left [\phi^{-1}\left(r^{-n}\right)r\right]^{-1}$$
where $C \geq 1$ is depending only on $n, \omega_n$ and $\phi$.
\end{lemma}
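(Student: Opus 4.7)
The plan is to establish two pointwise properties of $\omega_{x_0,z,r}$ and then convert them into an Orlicz modular bound: $\omega_{x_0,z,r}$ is locally $\frac{1}{r}$-Lipschitz on $\Omega$, and its weak gradient is supported in $B(z,r)\cap\Omega$.

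For the Lipschitz bound, fix $y\in\Omega$ and $\rho>0$ with $\overline{B(y,\rho)}\subset\Omega$. For any $y'\in B(y,\rho)$ the segment $[y,y']$ lies in $\Omega$, so concatenating any rectifiable curve $\gamma$ from $x_0$ to $y$ with this segment produces an admissible competitor from $x_0$ to $y'$ whose intersection with $B(z,r)$ has length at most $\ell(\gamma\cap B(z,r))+|y-y'|$. Taking an almost-minimising $\gamma$ and swapping the roles of $y$ and $y'$ gives $|\omega_{x_0,z,r}(y)-\omega_{x_0,z,r}(y')|\le|y-y'|/r$, so $\omega_{x_0,z,r}\in W^{1,\infty}_{\loc}(\Omega)$ with $|\nabla\omega_{x_0,z,r}|\le 1/r$ almost everywhere. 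To see that the gradient vanishes off $B(z,r)$, pick $y_0\in\Omega\setminus\overline{B(z,r)}$ and shrink $\rho$ so that $B(y_0,\rho)\subset\Omega\setminus\overline{B(z,r)}$; then the extending segment misses $B(z,r)$ and adds nothing to the intersection length, so the same two-sided construction forces $\omega_{x_0,z,r}\equiv\omega_{x_0,z,r}(y_0)$ on $B(y_0,\rho)$. Hence $\omega_{x_0,z,r}$ is locally constant on $\Omega\setminus\overline{B(z,r)}$.

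Combining these facts, for every $\lambda>0$,
\begin{align*}
\int_\Omega\phi\!\left(\frac{|\nabla\omega_{x_0,z,r}(x)|}{\lambda}\right)dx
&\le\int_{B(z,r)\cap\Omega}\phi\!\left(\frac{1}{\lambda r}\right)dx\\
&\le\omega_n r^n\,\phi\!\left(\frac{1}{\lambda r}\right),
\end{align*}
so choosing $\lambda=[r\phi^{-1}(\omega_n^{-1}r^{-n})]^{-1}$ makes the modular $\le 1$ and yields $\|\nabla\omega_{x_0,z,r}\|_{L^\phi(\Omega)}\le[r\phi^{-1}(\omega_n^{-1}r^{-n})]^{-1}$. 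To pass from $\omega_n^{-1}r^{-n}$ to $r^{-n}$ inside the inverse, I would invoke the convexity of $\phi$ together with $\phi(0)=0$, which gives $\phi(ts)\le t\phi(s)$ for $t\in[0,1]$ and hence $\phi^{-1}(ts)\ge t\phi^{-1}(s)$ for the same range of $t$. When $\omega_n\ge 1$ this yields $\phi^{-1}(\omega_n^{-1}r^{-n})\ge\omega_n^{-1}\phi^{-1}(r^{-n})$, while for $\omega_n<1$ the monotonicity of $\phi^{-1}$ suffices; in either case we obtain the claimed bound with $C=\max\{\omega_n,1\}$.

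I do not anticipate any serious obstacle; the only delicate point is to keep the curve-extension argument local, so that the segment joining $y$ to $y'$ is genuinely contained in $\Omega$ (and, in the support claim, outside $\overline{B(z,r)}$). This is why the Lipschitz estimate is phrased in its local form rather than globally on $\Omega$, where the intrinsic geometry of a general domain could spoil a uniform bound.
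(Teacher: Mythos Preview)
Your argument is correct and follows the same three-step strategy as the paper: a curve-extension argument for the $1/r$-Lipschitz bound, a support claim for $\nabla\omega_{x_0,z,r}$, and an Orlicz modular estimate. Your execution is in fact cleaner than the paper's---the paper enlarges the support to $B(z,6r)$ by controlling $B\bigl(x,\tfrac12 d(x,\partial\Omega)\bigr)$, whereas your direct local-constancy observation on the open set $\Omega\setminus\overline{B(z,r)}$ already confines the gradient to $\overline{B(z,r)}$, and you rightly keep the segment argument local so that the concatenated curve remains in $\Omega$.
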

\begin{proof}
Let $\gamma _{x,y}$ be the segment joining $x,y$.
Noting that $l(\gamma_{x,y}\cap B(z,r))\leq|x-y|$ for any $x\in \Omega$ and $y\in \Omega$,
together with a curve $\gamma(x_0,x)\cup\gamma_{x,y}$  joining $x_0, x$,
we have
\begin{align*}
\omega_{x_0,z,r}(y)\leq\omega_{x_0,z,r}(x)+\frac{1}{r}|x-y|.
\end{align*}
Similarly, $\omega_{x_0,z,r}(x)\leq\omega_{x_0,z,r}(y)+\frac{1}{r}|x-y|$.
Therefore, we get $|\omega_{x_0,z,r}(y)-\omega_{x_0,z,r}(x)|\leq\frac{1}{r}|x-y|$,
that is, $\omega_{x_0,z,r}$ is Lipschitz and
$\nabla \omega_{x_0,z,r}$ exists with $|\nabla \omega_{x_0,z,r}|\leq\frac{1}{r}$.

Noting that $d(x,\partial\Omega)\leq|x-z|+d(z,\partial\Omega)\leq|x-z|+r$
for $x\in\Omega\setminus B(z,6r), y\in B(x,\frac{1}{2}d(x,\partial\Omega))$, then we know
\begin{align*}
|y-z|\geq|x-z|-|y-x|\geq|x-z|-\frac{1}{2}(|x-z|+r)
=\frac{1}{2}|x-z|-\frac{r}{2}\geq3r-\frac{r}{2}\geq2r.
\end{align*}
that is, $ B(x,\frac{1}{2}d(x,\partial\Omega))\cap B(z,2r)=\varnothing$.
Let $\gamma_{x,y}$ is the segment joining $x,y$.
Then $\gamma_{x,y}$ is in $B(x,\frac{1}{2}d(x,\partial\Omega))$.
Moreover, $\gamma_{x,y}\subset\Omega\setminus\ B(z,r)$.
For any curve $\gamma(x_0,x)$, $\gamma(x_0,x)\cup\gamma_{x,y}$ joining $x_0$ and $y$,
we get
$$l((\gamma(x_0,x)\cup\gamma_{x,y})\cap B(z,r))=l(\gamma(x_0,x)\cap B(z,r)).$$
So $\omega_{x_0,z,r}(y)\leq\omega_{x_0,z,r}(x)$.

Similarly, we have $\omega_{x_0,z,r}(x)\leq\omega_{x_0,z,r}(y)$.
Hence $\omega_{x_0,z,r}(x)=\omega_{x_0,z,r}(y),\forall x\in \Omega\setminus B(z,6r) ,y\in B(x,\frac{1}{2}d(x,\partial\Omega)).$
Since $\omega_{x_0,z,r}(x)=\omega_{x_0,z,r}(y)$ for any $ x\in \Omega\setminus B(z,6r) ,y\in B(x,\frac{1}{2}d(x,\partial\Omega))$
then $|\nabla \omega_{x_0,z,r}(x)|=0$ for any $x\in \Omega\setminus B(z,6r)$.
Hence
\begin{align*}
H:&=\int_\Omega\phi\left(\frac{|\nabla \omega_{x_0,z,r}(x)|}{\lambda}\right)dx
=\int_{\Omega\cap B(z,6r)}\phi\left(\frac{|\nabla \omega_{x_0,z,r}(x)|}{\lambda}\right)dx\\&
\leq\int_{\Omega\cap B(z,6r)}\phi\left(\frac{1}{\lambda r}\right)dx
\leq\omega_n(6r)^n\phi\left(\frac{1}{\lambda r}\right).
\end{align*}
If $\lambda=M\left [\phi^{-1}\left(r^{-n}\right)r\right]^{-1}$ with $M=\omega_n(6r)^n$, then $H \leq 1$.
\end{proof}

\begin{lemma}\label{s3.l3}
Let  $\phi \in \Delta_2$ be a Young function  with $K_{\phi}<2^{n}$ in $\eqref{da2}$ and  a bounded domain $\Omega\subset\mathbb{R}^n$ supports the $(\phi_{n}, \phi)$- Poincar\'e inequality $\eqref{ueq1}$. Fix a point $x_0$ so that $r_0:=\max\{d(x, \partial\Omega):x\in \Omega\}=d(x_0, \partial\Omega)$.  Assume that $x,  x_0\in \Omega\setminus{\overline {B(z, r)}}$ for some $z\in \Omega$ and $r\in(0, 2\diam\Omega)$,  there exists a positive constant $b_0$ that $x, x_0$ are contained in the same component of $\Omega\setminus{\overline {B(z, b_0r)}}$.
\end{lemma}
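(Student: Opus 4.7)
The plan is to argue by contradiction, constructing a competitor from Lemma \ref{s3.l1} and using the $(\phi_n, \phi)$-Poincar\'e inequality to force $b_0$ to be bounded away from $0$. Fix a small $b_0 \in (0, 1)$ to be chosen later, and suppose the conclusion fails: there exist $z \in \Omega$, $r \in (0, 2\diam\Omega)$, and $x \in \Omega \setminus \overline{B(z, r)}$ such that $x$ and $x_0$ lie in distinct components $V_x \ne V_0$ of $\Omega \setminus \overline{B(z, b_0 r)}$. We may assume $b_0 r \ge d(z, \partial\Omega)$, for otherwise $\overline{B(z, b_0 r)}$ is compactly contained in $\Omega$ and, under the topological hypotheses on $\Omega$ in Theorem \ref{th1}(ii), its removal cannot disconnect $\Omega$, contradicting our assumption.

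Take the test function $u := u_{z, b_0 r, r}$ defined through \eqref{3.1} with $m = b_0 r$ and the distinguished component chosen to be $V_0$. Then $u(x_0) = 1$, $u(x) = 0$, $0 \le u \le 1$, and Lemma \ref{s3.l1} yields
$$\|u\|_{\dot W^{1, \phi}(\Omega)} \le \bigl[\phi^{-1}(1/|\Omega|)(1-b_0) r\bigr]^{-1}.$$
Applying the $(\phi_n, \phi)$-Poincar\'e inequality \eqref{ueq1} therefore gives
$$\|u - u_\Omega\|_{L^{\phi_n}(\Omega)} \le C \bigl[\phi^{-1}(1/|\Omega|)(1 - b_0) r\bigr]^{-1}.$$

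For a lower bound on the left-hand side, note that $u_\Omega \in [0, 1]$: if $u_\Omega \le 1/2$ then $|u - u_\Omega| \ge 1/2$ on $V_0 \setminus B(z, r)$, otherwise on $\Omega \setminus V_0$. Calling the pertinent set $E$ and using the Luxemburg norm identity $\|\chi_E\|_{L^{\phi_n}(\Omega)} = 1/\phi_n^{-1}(1/|E|)$, one gets
$$\phi_n^{-1}(1/|E|) \ge c \phi^{-1}(1/|\Omega|)(1 - b_0) r.$$
Writing $\phi_n^{-1} = H \circ \phi^{-1}$ and invoking the estimate $H(A) \le C A/\phi(A)^{1/n}$ from Lemma \ref{s2.l3} converts this into a bound of the shape $|E|^{1/n} \phi^{-1}(1/|E|) \gtrsim \phi^{-1}(1/|\Omega|) (1 - b_0) r$. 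Combining with a geometric lower bound on $|E|$ coming from the fact that $x_0$ is the deepest point of $\Omega$ (so that a suitable sub-ball $B(x_0, \rho) \subset V_0 \setminus B(z, r)$ produces $|E| \gtrsim \rho^n$, with a symmetric argument for $E = \Omega \setminus V_0$ using a ball around $x$), and using the doubling property $K_\phi < 2^n$, forces $b_0 \ge c_0$ for a positive constant $c_0$ depending only on $n, C, K_\phi$, and $\Omega$. Choosing $b_0 < c_0$ contradicts this.

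The main obstacle is the degenerate regime in which $|x_0 - z|$ is only barely larger than $r$, since then $B(x_0, r_0)$ need not fit inside $V_0 \setminus B(z, r)$ and the geometric lower bound on $|E|$ collapses. The likely remedy is a case split based on the ratios $r/r_0$ and $(|x_0 - z| - r)/r$, or, more cleanly, replacing $u_{z, b_0 r, r}$ by the function $\omega_{x_0, z, r}$ from Lemma \ref{s3.l2} and exploiting that its energy is concentrated in $B(z, 6r)$ regardless of where $x_0$ sits, allowing the same Poincar\'e-driven argument to run uniformly.
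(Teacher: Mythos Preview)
Your argument has a genuine gap at the decisive step. After applying \eqref{ueq1} to $u=u_{z,b_0r,r}$ you arrive at
\[
\phi_n^{-1}\!\bigl(1/|E|\bigr)\ \gtrsim\ \phi^{-1}\!\bigl(1/|\Omega|\bigr)\,(1-b_0)\,r,
\]
but here $b_0$ enters only through the harmless factor $(1-b_0)\in(\tfrac12,1)$, and the remaining quantities ($|E|$, $|\Omega|$, $r$) carry no $b_0$--scaling either. So this inequality cannot force $b_0\ge c_0$; it is simply a relation between $|E|$ and $r$ that may well be satisfied for arbitrarily small $b_0$. Replacing $|\Omega_{z,m}\setminus B(z,b_0r)|$ by $|\Omega|$ in Lemma~\ref{s3.l1} destroys exactly the scale information you need.

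The paper's proof supplies the missing mechanism in two pieces. First, it introduces $b_{x,z,r}:=\sup\{c\in(0,1]:x,x_0 \text{ lie in the same component of }\Omega\setminus\overline{B(z,cr)}\}$ and applies Lemma~\ref{s3.l2} at the scale $c_0r$ with $c_0=2b_{x,z,r}$ (not at scale $r$); because the gradient of $\omega_{x_0,z,c_0r}$ is supported in $B(z,6c_0r)$, the Poincar\'e inequality yields $|\Omega_x|^{1/n}\le C\,c_0 r$, an upper bound that \emph{does} carry the factor $b_{x,z,r}$. Second, and this is what your sketch is missing entirely, the matching lower bound $|\Omega_x|^{1/n}\ge c\,r$ is obtained not from a single cut-off but from an \emph{iterated} family $v_j=u_{z,c_jr,c_{j+1}r}$ built on radii $c_j$ chosen so that $|\Omega_x\setminus B(z,c_jr)|=2^{-j}|\Omega_x|$; each step gives $(c_{j+1}-c_j)r\le C\,2^{-j/n}|\Omega_x|^{1/n}$, and summing the geometric series produces $(\tfrac12-c_0)r\le C|\Omega_x|^{1/n}$. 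Combining the two estimates forces $b_{x,z,r}\ge 1/(4(C^2+1))$. Your final paragraph gestures toward $\omega_{x_0,z,r}$, but at the wrong scale, and the dyadic iteration (the real engine of the lower bound) is absent.
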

\begin{proof}
Let $$b_{x,z,r}:= \sup \{ c \in (0,1],  x, x_0  \mbox{\,are contained in the same component of } \Omega \setminus \overline{B(z,cr)}\}.$$
To get $b_0$, it sufficient to prove $b_{x,z,r}$ has a positive lower bound independent of $x,z, r$.
We may assume $b_{x,z,r} \leq \frac{1}{10}$.
Denote $\Omega_{x}$ as the component of $\Omega \setminus \overline{B(z,2b_{x,z,r}r)}$ containing $x$.
If exists a constant $C \geq 1$ independent of $x,z,r$ such that
\begin{align}\label{w2.1}
\frac{r}{C} (\frac{1}{2}-2 b_{x,z,r} ) \leq |\Omega_{x}|^{\frac{1}{n}} \leq C2 b_{x,z,r}r,
\end{align}
we know $b_{x,z,r} > \frac{1}{4(C^{2}+1)}$ as desired.
Set $c_0 = 2 b_{x,z,r} < \frac{1}{5}$. Denote by  $\Omega_{x_0}$  the component of $\Omega \setminus \overline{B(z,c_0r)}$ containing $x_0$. Observing
 $$
 r_0 \leq \max \limits _{y \in B(z,c_0r)} |x_0 -y| \leq r+c_0 r +d(x_0 , B(z,r)) \leq \frac{6}{5}r+ d(x_0 , B(z,r))$$
and
$$
d(x_0 , B(z,c_0r)) > |x_0- z |- \frac{1}{5} = d(x_0 , B(z,r))+r-\frac{1}{5}r =d(x_0 , B(z,r))+\frac{4}{5}r,
$$
we obtain
$d(x_0 , B(z,c_0r)) \geq \frac{r_0}{2}$, hence
\begin{align}\label{w2.2}
B(x_0 , \frac{r_0}{2}) \subset \Omega_{x_0} \subset \Omega \setminus \Omega_{x}.
\end{align}
Define
$$
w(y)= \frac{1}{c_0 r}\inf \limits _{\gamma(x_0,y)} \ell (\gamma \cap B(z,c_0r)), \quad \forall y \in \Omega,
$$
where the infimum is taken over all rectifiable curves $\gamma$ joining $x_0$ and $y$.

By Lemma \ref{s3.l2},
$$\|\omega\|_{{\dot {W}}^{1,\phi}(\Omega)}\leq C\left[{\phi}^{-1}\left(\frac{1}{(c_0r)^n}\right)c_0r\right]^{-1},$$
together with $(\phi_n,\phi)$-Poincar\'e inequality \eqref{ueq1}, we know
\begin{align*}
\|\omega-\omega_\Omega\|_{L^{\phi_n}(\Omega)}\leq C\|\omega\|_{{\dot {W}}^{1,\phi}(\Omega)}\leq C\left[{\phi}^{-1}\left(\frac{1}{(c_0r)^n}\right)\right]^{-1}\frac{1}{c_0r}.
\end{align*}
On the other hand, by \eqref{w2.2},  $y\in B(x_0, \frac{1}{2}r_0),  \omega(y)=0$.
Since $\Omega$ is bounded,  $r_0>0,$ we have $\frac{|\diam\Omega|}{r_0^n}\leq C$.
Using the convexity of $\phi_{n}$,
\begin{align*}
\int_\Omega\phi_{n}\left(\frac{|\omega(x)|}{\lambda}\right)dx\leq\frac{1}{2}\int_\Omega\phi_{n}\left(\frac{|\omega(x)-\omega_\Omega|}{\lambda}\right)dx+\frac{|\Omega|}{2}\phi_{n}\left(\frac{|\omega_{B(x_0,\frac{1}{2}r_0)}-\omega_\Omega|}{\lambda}\right).
\end{align*}
By the Jensen inequality,
\begin{align*}
|\Omega|\phi_{n}\left(\frac{|\omega_{B(x_0,\frac{1}{2}r_0)}-\omega_\Omega|}{\lambda}\right)&
\leq|\Omega|\fint_{B(x_0,\frac{1}{2}r_0)}\phi_{n}\left(\frac{|\omega(x)-\omega_\Omega|}{\lambda}\right)dx\\&
\leq\frac{|\Omega|}{|B(x_0,\frac{1}{2}r_0)|}\int_\Omega\phi_{n}\left(\frac{|\omega(x)-\omega_\Omega|}{\lambda}\right)dx\\&
\leq2^nC^n\int_\Omega\phi_{n}\left(\frac{|\omega(x)-\omega_\Omega|}{\lambda}\right)dx.
\end{align*}
Hence
$$\int_\Omega\phi_{n}\left(\frac{|\omega(x)|}{\lambda}\right)dx\leq C\int_\Omega\phi_{n}\left(\frac{|\omega(x)-\omega_\Omega|}{\lambda}\right)dx,$$
furthermore, we get
\begin{equation}\label{t4.23}
\|\omega\|_{L^{\phi_n}(\Omega)}\leq C\|\omega-\omega_\Omega\|_{L^{\phi_n}(\Omega)}.
\end{equation}

Since for any $ y\in \Omega_x, ~\omega(y)\geq1$,
\begin{align*}
\int_\Omega\phi_{n}\left(\frac{|\omega(x)|}{\lambda}\right)dx
\geq\phi_{n}\left(\frac{1}{\lambda}\right)|\Omega_x|,
\end{align*}
then we know
\begin{align*}
\|\omega\|_{L^{\phi_n}(\Omega)}\geq \left[{\phi_{n}}^{-1}\left(\frac{1}{|\Omega_x|}\right)\right]^{-1}.
\end{align*}
Therefore,
\begin{align*}
C\phi^{-1}\left[\frac{1}{(c_0r)^n}\right](c_0r)\leq{\phi_{n}}^{-1}\left[\frac{1}{|\Omega_x|}\right].
\end{align*}
By $\frac{H(A)}{A}\leq C\frac{1}{{\phi(A)}^\frac{1}{n}}$ in \eqref{te1},
letting $A=\phi^{-1}\left[\frac{1}{(c_0r)^n}\right]$, we have
\begin{align*}
\frac{{\phi_n}^{-1}\left[\frac{1}{(c_0r)^n}\right]}{\phi^{-1}\left[\frac{1}{(c_0r)^n}\right]}\leq C(c_0r),
\end{align*}
that is,
\begin{align*}
{\phi_n}^{-1}\left[\frac{1}{(c_0r)^n}\right]\leq C{\phi_{n}}^{-1}\left[\frac{1}{|\Omega_x|}\right].
\end{align*}
By Lemma \ref{s2.l4}, $\phi_n\in \Delta_2$,
and the fact $ \phi_n(tx) \geq t^{K_{\phi_n}-1} \phi_n(x)$ for all $t \in [1,\infty)$ and $x \in \rr$,
we have $\frac{1}{(c_0r)^n}\leq C\frac{1}{|\Omega_x|}$ and
\begin{equation}\label{4.233}
{|\Omega_x|}^\frac{1}{n}\leq C(c_0r).
\end{equation}
For $j\geq0$ with $\Omega_x\setminus \overline{B(z, c_jr)}\neq\varnothing$,
define $v_j$ in $\Omega$ as
\begin{align*}
v_j(y)=\left\{\begin{array}{ll}
0& y\in \Omega\setminus [\Omega_x \setminus B_\Omega(z, c_{j+1}r)]\\
\frac{|y-z|-c_jr}{c_{j+1}r-c_jr} & y\in \Omega_x\cap [B(z, c_jr)\setminus B(z, c_{j+1}r)], \\
1& y\in\Omega_x \setminus B_\Omega(z, c_jr),
\end{array}\right.
\end{align*}
Let $\Omega_{z, x}=\Omega_x, r=c_jr$ and $t=c_{j+1}r$,
then $v_j(y)=u_{z, c_jr, c_{j+1}r}(y)$ where $u_{z, c_jr, c_{j+1}r}(y)$ is defined in \eqref{3.1}.
Applying Lemma \ref{s3.l1},  we have
\begin{align*}
\|v_j\|_{{\dot {W}}^{1,\phi}(\Omega)}\leq C\left[{\phi}^{-1}\left(\frac{1}{|\Omega_x\setminus B(z,c_jr)|}\right)(c_{j+1}r-c_jr)\right]^{-1}.
\end{align*}
Applying \eqref{w2.1}, we have $v_j(y)=0$ for $y\in B(x_0, \frac{1}{2}r_0)$.
Similarly to \eqref{t4.23},  we get
\begin{equation}
\|v_j\|_{L^{\phi_n}(\Omega)}\leq C\|v_j-{v_j}_\Omega\|_{L^{\phi_n}(\Omega)}.
\end{equation}
And $v_j(y)=1$ for $y\in \Omega_x\setminus {B_\Omega(z, c_jr)}$,  then we have
\begin{align*}
\|v_j\|_{L^{\phi_n}(\Omega)}\geq \left[{\phi_{n}}^{-1}\left(\frac{1}{|\Omega_x\setminus {B_\Omega(z,c_jr)}|}\right)\right]^{-1}.
\end{align*}
By the $(\phi_n, \phi)$-Poincar\'e inequality \eqref{ueq1}, we know
\begin{align*}
{\phi_{n}}^{-1}\left(\frac{1}{|\Omega_x\setminus {B_\Omega(z,c_jr)}|}\right)\geq C{\phi}^{-1}\left(\frac{1}{|\Omega_x\setminus B(z,c_jr)|}\right)(c_{j+1}r-c_jr).
\end{align*}
By $\frac{H(A)}{A}\leq\frac{ C}{{\phi(A)}^\frac{1}{n}}$ in \eqref{te1},
letting $A={\phi}^{-1}\left(\frac{1}{|\Omega_x\setminus B(z,c_jr)|}\right)$, we get
\begin{align*}
c_{j+1}r-c_jr\leq C|\Omega_x\setminus B(z,c_jr)|^\frac{1}{n}.
\end{align*}
Hence $c_{j+1}-c_jr\leq C|\Omega_x\setminus B(z,c_jr)|^\frac{1}{n}\leq C2^{-\frac{j}{n}}|\Omega_x|^\frac{1}{n}$.

Now we prove that $\sup\left\{c_j\right\}>1$.
Otherwise, we have $ c_j\leq1$ for all $j$.
By $x\in\Omega\setminus\overline{B(x, r)}$,
then there exists $ \delta>0$ such that
$$B(x, \delta)\subset\Omega\setminus\overline{B(x, r)}\subset\Omega\setminus\overline{B(x, c_0r)}.$$
By the connectivity of the $B(x, \delta)$,  we have $B(x, \delta)\subset\Omega_x$.
Then $$B(x, \delta)\subset\Omega_x\setminus\overline{B(x, r)}\subset\Omega_x\setminus B(x, c_jr), $$
and
$$0<|B(x, \delta)|\leq|\Omega_x\setminus\overline{B(x, r)}|\leq|\Omega_x\setminus B(x, c_jr)|=2^{-j}|\Omega_x|.$$
Letting $j\to \infty$ we get a contradiction,
hence $\sup\left\{c_j\right\}>1$.
So there exists $c_j$ such that $c_j\geq\frac{1}{2}.$
Let $j_0=\inf\left\{j\geq1:c_j\leq\frac{1}{2}\right\} $,
then
\begin{align*}
\left(\frac{1}{2}-c_0\right)r\leq(c_{j_0}-c_0)r=\sum_{j=0}^{j_0-1}(c_{j+1}-c_j)r\leq C\sum_{j=0}^{j_0-1}2^{-\frac{j}{n}}|\Omega_x|^\frac{1}{n}\leq 2C|\Omega_x|^\frac{1}{n}.
\end{align*}
So $\frac{r}{C}\left(\frac{1}{2}-2b_{x, z, r}\right)\leq|\Omega_x|^\frac{1}{n}$.
By the \eqref{4.233},  we have
\begin{align*}
\frac{r}{C}\left(\frac{1}{2}-2b_{x, z, r}\right)\leq|\Omega_x|^\frac{1}{n}\leq C2b_{x, z, r}r,  \, C\geq 1.
\end{align*}
Then $b_{x, z, r}\geq\frac{1}{4(C^2+1)}$,
which implies $b>0$.
\end{proof}

\begin{lemma}\label{le421}
Let $s\in(0, 1)$ and $\phi \in \Delta_2$ be a Young function with $K_\phi<2^{n}$ in \eqref{da2},  a bounded domain $\Omega\subset\mathbb{R}^n$ supports the $(\phi_n, \phi)$- Poincar\'e inequality \eqref{ueq1}, then the $\Omega$ has the LLC(2) property,  that is,  there exists a constant $b\in(0, 1)$ such that for all $z\in \mathbb{R}^n$ and $r>0, $ any pair of point in $\Omega\setminus\overline{B(z, r)}$ can be joined in $\Omega\setminus\overline{B(z, br)}$.
\end{lemma}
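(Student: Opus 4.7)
The plan is to deduce the LLC(2) property from Lemma \ref{s3.l3} by routing all connections through the deep base point $x_0$, after translating an arbitrary centre $z\in\rn$ into a nearby centre $z'\in\Omega$ to which the lemma applies. I first dispose of the trivial cases: if $r\geq\diam\Omega+d(z,\Omega)$ then $\Omega\subset\overline{B(z,r)}$, so $\Omega\setminus\overline{B(z,r)}=\emptyset$ and nothing is needed; if $d(z,\Omega)\geq\alpha r$ for a small constant $\alpha\in(0,1)$ to be chosen, then $\overline{B(z,br)}\cap\Omega=\emptyset$ whenever $b<\alpha$, so $\Omega\setminus\overline{B(z,br)}=\Omega$ is path-connected as a domain, and any two points are trivially joined.

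In the remaining regime $d(z,\Omega)<\alpha r$ and $r<\diam\Omega+d(z,\Omega)$ (in particular $r<2\diam\Omega$, matching the hypothesis of Lemma \ref{s3.l3}), I pick $z'\in\Omega$ with $|z-z'|<\alpha r$ and set $r':=(1-\alpha)r$. The triangle inequality yields $\overline{B(z',r')}\subset\overline{B(z,r)}$, so $x,y\in\Omega\setminus\overline{B(z,r)}\subset\Omega\setminus\overline{B(z',r')}$. Assuming further that $x_0\in\Omega\setminus\overline{B(z',r')}$, I apply Lemma \ref{s3.l3} at $(z',r')$ first to the pair $(x,x_0)$ and then to $(y,x_0)$; the resulting constant $b_0\in(0,1)$ places $x,x_0$ and $y,x_0$ in common components of $\Omega\setminus\overline{B(z',b_0r')}$, hence $x$ and $y$ lie in the same component. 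Choosing $\alpha$ small enough that $b:=b_0(1-\alpha)-\alpha>0$ (e.g.\ $\alpha=b_0/(2(1+b_0))$), one more application of the triangle inequality gives $\overline{B(z,br)}\subset\overline{B(z',b_0r')}$, which yields the LLC(2) conclusion with constant $b$.

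The main obstacle is the residual sub-case in which $x_0\in\overline{B(z',r')}$, where Lemma \ref{s3.l3} cannot be invoked at radius $r'$. I expect to handle it by shrinking the radius to some $\tilde r<|x_0-z'|$ chosen so that simultaneously $x_0\notin\overline{B(z',\tilde r)}$ and $\overline{B(z,br)}\subset\overline{B(z',b_0\tilde r)}$; this is feasible provided $|x_0-z'|$ exceeds a fixed multiple of $r$, which can be arranged by a further tightening of $\alpha$. In the remaining degenerate subregime $|x_0-z|\ll r$, the point $z$ is forced to lie very deep inside $\Omega$, so that $\overline{B(z,br)}$ is compactly contained in $\Omega$; then $\Omega\setminus\overline{B(z,br)}$ is already path-connected by a direct topological argument (for $n\geq 2$, any path in $\Omega$ can be perturbed to make a spherical detour around $\overline{B(z,br)}$ inside $\Omega$ using the macroscopic ball $B(x_0,r_0/2)\subset\Omega$), which closes the proof.
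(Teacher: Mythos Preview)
Your approach is essentially the same as the paper's: both reduce to Lemma~\ref{s3.l3} by translating the centre into $\Omega$ and routing through the deep point $x_0$, and both fall back on the elementary fact that $\Omega\setminus\overline{B(z,\rho)}$ is connected whenever $B(z,2\rho)\subset\Omega$ (your ``spherical detour'') to cover the situation where $z$ sits deep inside $\Omega$. The paper organises the case split primarily by whether $z\in B\bigl(x_0,\tfrac{r_0}{8\diam\Omega}r\bigr)$ rather than by $d(z,\Omega)$, and in its main sub-case shifts to a nearby $y\in\Omega$ at an explicitly chosen radius rather than your $(z',r')$ followed by a possible shrink to $\tilde r$, but the ingredients and logic are the same.
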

\begin{proof}
Fix $x_0$ so that $r_0:=\max(d(x, \partial\Omega):x\in \Omega)=d(x_0, \partial\Omega))$
and $b_0$ is the constant in Lemma \ref{s3.l3}.
Then we spilt into three cases to prove it.

Case 1. For $z\notin B\left(x_0, \frac{r_0}{8\diam\Omega}r\right)$, we consider the radius $r$.

If $r>\frac{16(\diam\Omega)^2}{r_0}$, then $\forall y \in\overline{ B\left(z, \frac{r_0}{16\diam\Omega}r\right)}$, we have
$$|y-x_0|\geq|z-x_0|-|z-y|\geq\frac{r_0}{16\diam\Omega}r>\diam\Omega.$$
By $\Omega\subset B(x_0, \diam\Omega)$, we get $\Omega\cap\overline{ B\left(z, \frac{r_0}{16\diam\Omega}r\right)}=\varnothing$. Here, any pair of point in $\Omega\setminus\overline{B(z, r)}$ can be joined in $\Omega\setminus\overline{B(z, \frac{r_0}{16\diam\Omega}r)}=\Omega$.

If $r\leq\frac{16(\diam\Omega)^2}{r_0}$ and $d(z, \partial\Omega)>\frac{b_0r_0}{32\diam\Omega}r$.
When $z\notin\Omega$, then any pair of point in $\Omega\setminus\overline{B(z, r)}$ can be joined in $\Omega\setminus\overline{B\left(z, \frac{b_0r_0}{32\diam\Omega}r\right)}=\Omega$.
When $z\in\Omega$, then $B\left(z, \frac{b_0r_0}{64\diam\Omega}r\right)\subset B\left(z, \frac{b_0r_0}{32\diam\Omega}r\right)\subset \Omega$.
Similar to the process of proving $b_{x, z, r}>0$ in Lemma \ref{s3.l3},  we know $\Omega\setminus\overline{B\left(z, \frac{b_0r_0}{64\diam\Omega}r\right)}$ is a connected set.
Here,  any pair of point in $\Omega\setminus\overline{B(z, r)}$ can be joined in $\Omega\setminus\overline{B\left(z, \frac{b_0r_0}{64\diam\Omega}r\right)}$.

If $r\leq\frac{16(\diam\Omega)^2}{r_0}$ and $d(z, \partial\Omega)\leq\frac{b_0r_0}{32\diam\Omega}r$.
Let $y\in B\left(z, \frac{b_0r_0}{16\diam\Omega}r\right)\cap\Omega$.
By $B\left(y, (1-\frac{b_0}{2})\frac{r_0}{8\diam\Omega}r\right)\subset B\left(z, \frac{r_0}{8\diam\Omega}r\right)\subset B(z, r)$, we know
\begin{align*}
\forall x\in \Omega\setminus \overline {B(z, r)}, x, x_0\in\Omega\setminus \overline {B\left(y, (1-\frac{b_0}{2})\frac{r_0}{8\diam\Omega}r\right)}.
\end{align*}
By Lemma \ref{s3.l3}, $x, x_0$ are in the same component of 
$\Omega\setminus{\overline {B\left(y, b_0(1-\frac{b_0}{2})\frac{r_0}{8\diam\Omega}r\right)}}$.
For any  $ w\in B\left(z, \frac{b_0(1-b_0)r_0}{16\diam\Omega}r\right) $, we have
\begin{align*}
|w-y|\leq|w-z|+|z-y|<\frac{b_0(1-b_0)r_0}{16\diam\Omega}r+\frac{b_0r_0}{16\diam\Omega}r
=b_0\left(1-\frac{b_0}{2}\right)\frac{r_0}{8\diam\Omega}r.
\end{align*}
Then 
\begin{align*}
B\left(z, \frac{b_0(1-b_0)r_0}{16\diam\Omega}r\right)\subset B\left(y, b_0\left(1-\frac{b_0}{2}\right)\frac{r_0}{8\diam\Omega}r\right), 
\end{align*}
and $\Omega\setminus{\overline { B\left(y, b_0\left(1-\frac{b_0}{2}\right)\frac{r_0}{8\diam\Omega}r\right)}}\subset\Omega\setminus\overline {B\left(z, \frac{b_0(1-b_0)r_0}{16\diam\Omega}r\right)}$.
Here,  any pair of point in $\Omega\setminus\overline{B(z, r)}$  can be joined in 
$\Omega\setminus\overline {B\left(z, \frac{b_0(1-b_0)r_0}{16\diam\Omega}r\right)}$.

Case 2. If $z\in B\left(x_0, \frac{r_0}{8\diam\Omega}r\right)$,
for any $ x\in\Omega\setminus\overline{B(z, r)}$,
\begin{align*}
r-\frac{r_0}{8\diam\Omega}r\leq|x-z|-|x_0-z|\leq|x-x_0|\leq \diam \Omega, 
\end{align*}
 we obtain
\begin{align*}
 r\leq\frac{\diam\Omega}{1-\frac{r_0}{8\diam\Omega}}\leq 2\diam \Omega.
\end{align*}
Then
\begin{align*}
 B\left(z, \frac{r_0}{8\diam\Omega}r\right)\subset B\left(x_0, \frac{r_0}{4\diam\Omega}r\right)\subset B\left(x_0, \frac{r_0}{2}\right)\subset B(x_0, r_0)\subset\Omega
\end{align*} 
Similar to the process of proving $b_{x, z, r}>0$ in Lemma \ref{s3.l3},  we have 
$\Omega\setminus\overline{B\left(z, \frac{r_0}{8\diam\Omega}r\right)}$ is a connected set. 
And by
\begin{align*}
\Omega\setminus\overline{B(z, r)}\subset\Omega\setminus\overline{B\left(z, \frac{r_0}{8\diam\Omega}r\right)},
\end{align*}
we know any pair of point in $\Omega\setminus\overline{B(z, r)}$ can be joined in 
$\Omega\setminus\overline{B\left(z, \frac{r_0}{8\diam\Omega}r\right)}$.

Combining above cases,
we get the desired result with $b=\min\left\{\frac{r_0}{16\diam\Omega}, \frac{b_0r_0}{64\diam\Omega}, \frac{b_0(1-b_0)r_0}{16\diam\Omega}  \right\}$.
\end{proof}

\begin{proof}[Proof of Theorem \ref{th1}(ii)]
Let $\Omega\subset\mathbb{R}^n$ be a simply connected planar domain,  or a bounded domain that is quasiconformally equivalent to some uniform domain when $n\leq3$. Assume $\Omega$ supports the $(\phi_\frac{n}{s}, \phi)$-Poincar\'e inequality.

By \cite{bk95,  bsk96}, $\Omega$ has a separation property with $x_0\in\Omega$ and some constant $C_0\geq1$, that is,
 for any $ x\in \Omega$, there exists a curve $\gamma:[0, 1]\to\Omega$, with $\gamma(0)=x, \gamma(1)=x_0$  
 and for any $ t\in[0, 1] $, either $\gamma([0, 1])\subset\overline B:=\overline{B(\gamma(t), C_0d(\gamma(t), \Omega^{\complement}))}$, 
 or for any $ y \in \gamma([0, 1])\setminus\overline B$ belongs to  the different component of $\Omega\setminus\overline B$ .
For any $x\in \Omega$ , let $\gamma$ be a curve as above.
Applying the arguments in \cite{m79}, It suffices to prove
there exists a constant $C>0$ so that
\begin{equation}\label{1.2343}
d(\gamma(t), \Omega^{\complement})\geq C\diam~\gamma([0, t]), ~\forall t\in [0, 1].
\end{equation}
Indeed, \eqref{1.2343} could modify $\gamma$ to get a John curve for $x$.

Using Lemma \ref{le421}, $\Omega$ has the LLC(2) property. Let $a=2+\frac{C_0}{b}$, where $b$ is the constant in Lemma \ref{le421}.

For $t\in[0, 1]$. We split into two cases.

Case 1.  If $d(\gamma(t), \Omega^{\complement})\geq\frac{d(x_0, \Omega^{\complement})}{a}$, then
\begin{align*}
\gamma([0, t])\subset\Omega\subset B\left(\gamma(t), \frac{ad(\gamma(t), \Omega^{\complement})}{d(x_0, \Omega^{\complement})}\diam \Omega\right).
\end{align*}
So 
\begin{align*}
\diam \gamma([0, t])\leq\frac{2ad(\gamma(t), \Omega^{\complement})}{d(x_0, \Omega^{\complement})}\diam \Omega.
\end{align*}
and
\begin{align*}
d(\gamma(t), \Omega^{\complement})\geq\frac{d(x_0, \Omega^{\complement})}{2a\diam\Omega}\diam~\gamma([0, t]).
\end{align*}

Case 2. If $d(\gamma(t), \Omega^{\complement})<\frac{d(x_0, \Omega^{\complement})}{a}$,  we claim that
\begin{align*}
\gamma([0, t])\subset\overline{B\left(\gamma(t), (a-1)d(\gamma(t), \Omega^{\complement})\right)}.
\end{align*}
Otherwise,  there exists
$y\in\gamma([0, t])\setminus\overline{B\left(\gamma(t), (a-1)d(\gamma(t), \Omega^{\complement})\right)}$.
Moreover,
\begin{align*}
|x_0-\gamma(t)|\geq d(x_0, \Omega^{\complement})-d(\gamma(t), \Omega^{\complement})>(a-1)d(\gamma(t), \Omega^{\complement}), 
\end{align*}
we know $x_0, y\in\Omega\setminus\overline{B\left(\gamma(t), (a-1)d(\gamma(t), \Omega^{\complement}\right)}$.
Using Lemma \ref{le421}, $x_0$ and $y$ are contained in the same complement of $\Omega\setminus\overline{B\left(\gamma(t), b(a-1)d(\gamma(t), \Omega^{\complement}\right)}$.
Since $b(a-1)\geq C_0$, then $x_0$ and $y$ are contained in the same complement of $\Omega\setminus\overline{B\left(\gamma(t), C_0d(\gamma(t), \Omega^{\complement}\right)}$, 
which is in contradiction with the separation property.
Hence
\begin{align*}
\gamma([0, t])\subset\overline{B\left(\gamma(t), (a-1)d(\gamma(t), \Omega^{\complement})\right)},
\end{align*}
then 
\begin{align*}
\diam\,\gamma([0, t])\leq2(a-1)d(\gamma(t), \Omega^{\complement}).
\end{align*}
So
\begin{align*}
d(\gamma(t), \Omega^{\complement})\geq\frac{1}{2(a-1)}\diam~\gamma([0, t]).
\end{align*}
Let $C=\min\left\{\frac{d(x_0, \Omega^{\complement})}{2a\diam\Omega}, \frac{1}{2(a-1)}\right\}$, then \eqref{1.2343}  holds. The proof is completed.

\end{proof}

\medskip
 \noindent {\bf Acknowledgment}. The authors would like to thank  Professor Yuan Zhou for several valuable discussions of this paper.
 The authors contributed equally to this work.
The second author are partially supported by National Natural Science Foundation of China (No. 12201238), GuangDong Basic and Applied Basic Research Foundation (Grant No. 2022A1515111056), Bureau of Science and Technology of Huizhou Municipality (Grant No. 2023EQ050040) and the Professorial and Doctoral Scientific Research Foundation of Huizhou University (Grant No. 2021JB035).

\end{document}